\newtheorem{theorem}{Theorem}[section]
\newtheorem{lemma}[theorem]{Lemma}
\newtheorem{corollary}[theorem]{Corollary}
\newtheorem{proposition}[theorem]{Proposition}
\newtheorem{assumption}[theorem]{Assumption}
\theoremstyle{definition}
\newtheorem{definition}[theorem]{Definition}
\theoremstyle{remark}
\newtheorem{remark}[theorem]{Remark}
\numberwithin{equation}{section}
\begin{document}

\def\a{\alpha}
\def\b{\beta}
\def\d{\delta}
\def\g{\gamma}
\def\l{\lambda}
\def\o{\omega}
\def\s{\sigma}
\def\t{\tau}
\def\th{\theta}
\def\r{\rho}
\def\D{\Delta}
\def\G{\Gamma}
\def\O{\Omega}
\def\e{\epsilon}
\def\p{\phi}
\def\P{\Phi}
\def\S{\Psi}
\def\E{\eta}
\def\m{\mu}
\def\grad{\nabla}
\def\bar{\overline}
\newcommand{\A}{\mathcal{A}}
\newcommand{\reals}{\mathbb{R}}
\newcommand{\naturals}{\mathbb{N}}
\newcommand{\ints}{\mathbb{Z}}
\newcommand{\complex}{\mathbb{C}}
\newcommand{\rationals}{\mathbb{Q}}
\newcommand{\innerprod}[1]{\left\langle#1\right\rangle}
\newcommand{\dualprod}[1]{\left\langle#1\right\rangle}
\newcommand{\norm}[1]{\left\|#1\right\|}
\newcommand{\abs}[1]{\left|#1\right|}
\newcommand{\sn}{\mathop{\mathrm{sn}}}

\author[Y. Guo]{Yanqiu Guo}
\address{Department of Computer Science and Applied Mathematics, Weizmann Institute of Science\\
Rehovot 76100, Israel} \email{yanqiu.guo@weizmann.ac.il}

\author[E. S. Titi]{Edriss S. Titi}
\address{Department of Mathematics and Department of Mechanical and Aerospace Engineering\\
University of California, Irvine, California 92697-3875, USA. Also Department of Computer
Science and Applied Mathematics, Weizmann Institute of Science, Rehovot 76100, Israel} \email{etiti@math.uci.edu and edriss.titi@weizmann.ac.il}

\title[Persistency of Analyticity for nonlinear wave equations]
{Persistency of analyticity for nonlinear wave equations: an energy-like approach}

\begin{abstract}
We study the persistence of the Gevrey class regularity of solutions to nonlinear wave equations with real analytic nonlinearity. Specifically, it is proven that the solution remains in a Gevrey class, with respect to some of its spatial variables,  during its whole life-span, provided the initial data is from the same Gevrey class with respect to these spatial variables. In addition, for the special Gevrey class of analytic functions, we find a lower bound for the radius of the spatial analyticity of the solution that might shrink either algebraically or exponentially, in time, depending on the structure of the nonlinearity. The  standard $L^2$ theory for the Gevrey class regularity is employed; we also employ energy-like methods for a generalized version of Gevrey classes based on the $\ell^1$ norm of Fourier transforms (Wiener algebra). After careful comparisons, we observe an indication that the $\ell^1$ approach provides a better lower bound for the radius of analyticity of the solutions than the $L^2$ approach. We present our results in the case of period boundary conditions, however, by employing exactly the same tools and proofs one can obtain similar results for the nonlinear wave equations and the nonlinear Schr\"odinger equation, with real analytic nonlinearity,  in certain domains and manifolds without physical boundaries, such as  the whole space  $\mathbb{R}^n$, or on the sphere $\mathbb{S}^{n-1}$.
\end{abstract}

\date{August 12, 2013. {\bf To appear in:} {\it Bulletin  of Institute of Mathematics, Academia Sinica (N.S.)}}
\maketitle
 {\bf MSC Subject Classifications:} 35L05, 35L72, 37K10.

{\bf Keywords:} Gevrey class regularity, propagation of analyticity, nonlinear wave equations.

\maketitle

\bigskip
{\it Dedicated to Professor Neil Trudinger on the occasion of his  70th birthday.}
\bigskip
\section{Introduction}
In this article we investigate the persistence of the Gevrey class regularity of solutions to the nonlinear wave equation
\begin{align} \label{wave-eqn}
\begin{cases}
\Box u+u+f(t, x, u, \nabla u, u_t)=0 \text{\;\;in\;\;} {\mathbb{T}^n}=[0,2\pi]^n;\\
u(0)=u_0, \;\;u_t(0)=u_1,
\end{cases}
\end{align}
with periodic boundary condition on $u$, where $f$ is periodic with respect to the spatial variable $x$.

Concerning the analytic regularity for (\ref{wave-eqn}), we shall mention a few results in the literature. If $f$ is real analytic in all its arguments and the initial data are real analytic, then the classical Cauchy-Kowalewski Theorem asserts a unique real analytic solution of (\ref{wave-eqn}) for $t$ near 0. Ovsiannikov \cite{Ovsjannikov-71} and Nirenberg \cite{Nirenberg-72} generalized this theorem to the case when the nonlinearity $f$ is merely continuous in $t$ with values as an analytic function of the other variables. It is important to note that by these findings we only know that the solution to (\ref{wave-eqn}) is analytic in a small neighborhood of $t=0$. Later, Alinhac and Metivier \cite{Alinhac-Metivier-84} improved this result by showing that the analyticity of the solution to (\ref{wave-eqn}) lasts for as long as a classical solutions exists. Recently, Kuksin and Nadirashvili \cite{Kuksin-Nadirashvili-12} demonstrated a short and more transparent proof of this property (and actually more general) based on the nonlinear semigroup generated by (\ref{wave-eqn}). Nevertheless, none of the above mentioned results provide a lower estimate for the radius of analyticity of the solution to the nonlinear wave equation (\ref{wave-eqn}). We attempt to answer this question in the present paper by employing the well-developed Gevrey class theory based on energy estimate tools.

Gevrey classes were introduced by Maurice Gevrey (1918) to generalize real analytic functions. Briefly speaking, a Gevrey class is an intermediate space between the spaces of $C^{\infty}$ functions and real-analytic functions. The tools and results developed in this paper are concerned with Gevrey class functions in domains or manifolds without boundaries; specifically, either periodic boundary conditions, the whole space or the sphere. In literature, the Gevrey class energy-like technique has been a powerful tool for studying the regularity of solutions to nonlinear evolution equations, such as  Navier-Stokes equations \cite{Biswas-Swanson, Cao-Rammaha-Titi2, Foias-Temam-89}, parabolic PDE's \cite{Cao-Rammaha-Titi1, Ferrari-Titi-98, Kreiss}, and Euler equations \cite{Kukavica-Vicol-09, Levermore-Oliver-97}; see also \cite{Bardos-Benachour} for persistence of analyticity of solutions of  Euler equations in domains with physical boundaries.

The authors of \cite{Larios-Titi-10} investigated the analytic regularity of Euler-Voigt equations, and provided rigorous justification to the formal tools and proofs that were introduced in \cite{Levermore-Oliver-97} (see also \cite{Kalantatov-Levant-Titi} for the analytic regularity of the attractor of the Navier-Stoke-Voigt mode). Since the Euler-Voigt system behaves like a hyperbolic system of equations, we are able to adopt the techniques of \cite{Larios-Titi-10} to study the nonlinear wave equation (\ref{wave-eqn}). Also we draw ideas from \cite{Ferrari-Titi-98}, in which the authors  establish the Gevrey class regularity of analytic solutions for general nonlinear parabolic equations with analytic nonlinearity. Moreover, the authors of \cite{Kukavica-Vicol-09, Levermore-Oliver-97, Oliver-Titi-00, Oliver-Titi-01} provide additional  tools for estimating lower bounds of the radius of analyticity of solutions to evolution differential equations.

A function $u\in C^{\infty}(\mathbb T^d)$ is said to be of \emph{Gevrey class s}, for some $s\geq 1$, if there exist constants $\rho>0$ and $M<\infty$, such that for every $x\in \mathbb T^d$ and every $\alpha \in \mathbb N^d$ one has
$$|\partial^{\alpha}u(x)|\leq M\left(\frac{\alpha !}{\rho^{|\alpha|}}\right)^s.$$
Next we shall introduce the Gevrey-Sobolev classes which will be used in this paper. We set $A:=\sqrt{I-\Delta}$, thus  $H^p({\mathbb{T}^n})=\mathcal D(A^p)$ is the Sobolev space of functions with the periodic boundary condition. The norm of $H^p$ is given by
\begin{align*}
\norm{u}_{H^p({\mathbb{T}^n})}=\left(\sum_{j\in \ints^n}|u_j|^2(1+|j|^2)^p \right)^{\frac{1}{2}},
\end{align*}
where $u_j$ are the Fourier coefficients of $u$, i.e., $u=\sum_{j\in \ints^n} u_j e^{i j\cdot x}$.
A \emph{Gevrey-Sobolev class} of order $s\geq 1$ is defined by $\mathcal D(A^p e^{\t A^{1/s}})$, with its norm
\begin{align} \label{G-norm}
\norm{A^p e^{\t A^{1/s}} u}=\left(\sum_{j\in \ints^n}|u_{j}|^2(1+|j|^2)^p e^{2\t(1+|j|^2)^{\frac{1}{2s}}}\right)^{\frac{1}{2}}, \;\;s\geq 1,
\end{align}
where $\t>0$. Throughout, $\norm{\cdot}$ denotes the $L^2$ norm.
It is known that $\mathcal D(A^p e^{\t A^{1/s}})$ is a subclass of the Gevrey class $s$ (cf. \cite{Levermore-Oliver-97}). More importantly, for the special case $s=1$, the Gevrey-Sobolev class $\mathcal D(A^{p} e^{\t A})$, $\t>0$, corresponds to the set of real analytic functions with radius of analyticity bounded below by $\t$.

For the sake of clarity, throughout the paper we focus on functions in the Gevrey class $s=1$, i.e., the space of real analytic functions. Nonetheless, all results in this manuscript equally hold  for any Gevrey class $s\geq 1$.
Indeed, the main idea, as we will see later, relies on the fact that
\begin{align*}
e^{\tau(1+|m+j|^2)^{\frac{1}{2s}}}\leq e^{\tau(1+|m|^2)^{\frac{1}{2s}}} \cdot
e^{\tau(1+|j|^2)^{\frac{1}{2s}}}, \text{\;\;for\;\;} s\geq 1.
\end{align*}

It is important to stress that we are interested in initial data which are analytic in merely some of its spatial variables, and aim to show the persistence of analyticity with respect to these variables. To this end, we shall introduce a slightly modified Gevrey-Sobolev class.
  Indeed, we define the operator $\mathscr A^p: \mathcal D(\mathscr A^p)\subset L^2({\mathbb{T}^n})\rightarrow L^2({\mathbb{T}^n})$ by $\mathscr A^p u=\sum_{j\in \ints^n} u_j |j'|^p  e^{i j \cdot x}$ with its domain given by
\begin{align*}
\mathcal D(\mathscr A^p)=\Big\{u\in L^2({\mathbb{T}^n}): u=\sum_{j\in \ints^n} u_j e^{i j \cdot x},\;
\sum_{j\in \ints^n}|u_j|^2 |j'|^{2p}<\infty\Big\},
\end{align*}
where $j'$ represents the first $m$ components of $j=(j_1,\ldots,j_n)\in \ints^n$, that is, $j'=(j_1,\ldots,j_m)$, for some fixed integer $m\in [1,n]$.
With the operators $A$ and $\mathscr A$ we introduce a new Gevrey-Sobolev class $\mathcal D(A^p e^{\t \mathscr A})$, with its norm
\begin{align} \label{G-norm'}
\norm{A^p e^{\t \mathscr A}u}=\left(\sum_{j\in \ints^n}|u_{j}|^2(1+|j|^2)^p
e^{2\t|j'|}\right)^{\frac{1}{2}}.
\end{align}
In fact, $\mathcal D(A^p e^{\t \mathscr A})$ is a Hilbert space. Also Corollary \ref{cor-1} in the appendix states that if $u\in \mathcal D(A^p e^{\t \mathscr A})$, $p>\frac{n}{2}$, then $u$ is real analytic in its first $m$ arguments $(x_1,\ldots,x_m)$ and $\t$ is a lower bound of the radius of analyticity with respect to these variables.

The paper is organized as follows: in Section \ref{sec-L-2} we use the Gevrey-Sobolev class $\mathcal D(A^p e^{\t \mathscr A})$ to analyse the regularity of solutions to nonlinear wave equations and estimate the radius of analyticity. In Section \ref{sec-l-1} we work on the same problem by applying another type of Gevrey classes based on the $\ell^1$ norm of Fourier transforms (Wiener algebra). Finally, in Section \ref{sec-compare} we compare these two different estimates via the careful investigation of a nonlinear Klein-Gordon equation and conclude that the Wiener algebra approach provides a ``better" lower bound for the radius of analyticity of the solutions than the $L^2$ approach (see also \cite{Oliver-Titi-01} and \cite{Guo-Titi-13} for this kind of comparison).

\bigskip
\section{$L^2$ estimate} \label{sec-L-2}
In this section we employ the Gevrey-Sobolev class $\mathcal D(A^p e^{\t \mathscr A})$ to study the analytic regularity of solutions to (\ref{wave-eqn}). Since the norm (\ref{G-norm'}) of $\mathcal D(A^p e^{\t \mathscr A})$ is based on the $L^2$ norm, the estimate in this section relies on the standard $L^2$ theory.

We begin with the definition of a solution of (\ref{wave-eqn}). Throughout the paper, $(\cdot,\cdot)$ denotes the $L^2$ inner product; while $\langle \cdot, \cdot \rangle$ denotes the duality pairing between $(H^1)'$ and $H^1$.
\begin{definition} \label{solution}
We say a function $u\in C([0,T];H^p)$, with $u_t\in C([0,T];H^{p-1})$ and
$u_{tt}\in C([0,T];H^{p-2})$, $p\geq 1$, is a \emph{solution} of the initial value problem (\ref{wave-eqn}) provided $u(0)=u_0 \in H^p$, $u_t(0)=u_1\in H^{p-1}$ and
\begin{align}
\langle u_{tt},\phi \rangle+(A u,A\phi)+\langle f(t,x,u,\nabla u,u_t),\phi \rangle=0
\end{align}
for every $\phi \in H^1$, and for every $t\in [0,T]$.
\end{definition}

First of all let us deal with a simpler nonlinearity $f(t,x,u)$, i.e., we shall study the equation
\begin{align} \label{wave-eqn-1}
\begin{cases}
\Box u+u+f(t, x, u)=0;\\
u(0)=u_0, \;\;u_t(0)=u_1,
\end{cases}
\end{align}
where $f$ satisfies the following assumption:
\begin{assumption} \label{ass-1}
Let $f(t,x,u)=\sum_{j\in \ints^n} \hat c_j(t,u) e^{i j\cdot x}$, where $\hat c_j(t,u)=\sum_{k=0}^{\infty}a_{jk}(t)u^k$, where $a_{jk}(t)$ are continuous functions on $[0,T]$. Suppose $f$ has a majorising function
\begin{align} \label{def-g}
g(t,s):=\sum_{k=0}^{\infty} \sum_{j\in \ints^n} |a_{jk}(t)| (1+|j|^2)^{\frac{p}{2}} e^{\lambda |j'|}s^k
\end{align}
converging for all $s\in \reals$, $t\geq 0$, where $\lambda>0$, and $j'=(j_1,\dots,j_m)$, $1\leq m \leq n$.
\end{assumption}

\begin{remark} \label{remark-1}
Note Assumption \ref{ass-1} implies the following properties of $f$: it is continuous in $t$, real analytic in $(x_1,\ldots,x_m)\in \mathbb T^m$, and real analytic (entire) in $u\in \reals$.
In fact, for any fixed $t\geq 0$ and $u\in \reals$, $f(t,x,u)$ is, as a function of $x$, in the Gevrey class $\mathcal D(A^p e^{\lambda \mathscr A})$. To see this, we note
\begin{align} \label{e-17}
&\sum_{j\in \ints^n}|\hat c_j(t,u)| (1+|j|^2)^{\frac{p}{2}} e^{\lambda |j'|}\notag\\
&\leq \sum_{k=0}^{\infty} \sum_{j\in \ints^n} |a_{jk}(t)| (1+|j|^2)^{\frac{p}{2}} e^{\lambda |j'|} |u|^k
=g(t,|u|)<\infty,
\end{align}
which implies
\begin{align*}
\sum_{j\in \ints^n}|\hat c_j(t,u)|^2 (1+|j|^2)^p e^{2\lambda |j'|}<\infty,
\end{align*}
that is to say $f(t,x,u)\in \mathcal D(A^p e^{\lambda \mathscr A})$, for any fixed $t\geq 0$ and $u\in \reals$.
On the other hand, for every fixed $t\geq 0$ and $x\in {\mathbb{T}^n}$, we notice
$$f(t,x,u)=\sum_{j\in \ints^n} \hat c_j(t,u) e^{i j\cdot x}
=\sum_{k=0}^{\infty}\left(\sum_{j\in \ints^n}e^{i j\cdot x} a_{jk}(t) \right) u^k$$ converges absolutely for all $u\in \reals$, that is to say,
$f(t,x,u)$ is real analytic (entire) with respect to the variable $u\in \reals$.

In particular, any nonlinear function $\tilde f(t,u)$ (independent of $x$), which is continuous in $t$ and real analytic (entire) with respect to $u$, satisfies Assumption \ref{ass-1}.
\end{remark}

Now we state our first result.

\begin{theorem} \label{thm-1}
Suppose the nonlinearity $f(t,x,u)$ satisfy Assumption \ref{ass-1}. Let $u_0\in \mathcal D(A^{p+1}e^{\sigma \mathscr A})$ and $u_1\in \mathcal D(A^p e^{\sigma \mathscr A})$, where $p>\frac{n}{2}$ and $\sigma>0$. Assume the initial-value problem (\ref{wave-eqn-1}) has a unique solution $u\in C([0,T];H^p)$ with $u_t\in C([0,T];H^{p-1})$, in the sense of Definition \ref{solution}.
Suppose $\t(t)$ is the solution of the differential equation
\begin{align} \label{radius-1}
\t'(t)=-\t^3(t) h(t) \text{\;\;with\;\;} \t(0)=\t_0:=\min\{\sigma,\lambda\},
\end{align}
that is,
\begin{align} \label{sol-tau}
\t(t)=\left(2\int_0^t h(s)ds+\t_0^{-2}\right)^{-\frac{1}{2}},
\end{align}
where the function $h(t)\geq 0$ for all $t\in [0,T]$, given in (\ref{def-h}), below, depends on $\norm{u(t)}_{H^p}$, $\norm{A^{p+1}e^{\t_0 \mathscr A}u_0}$
and $\norm{A^p e^{\t_0 \mathscr A}u_1}$. Then, $u(t)\in \mathcal D(A^{p+1}e^{\t(t)\mathscr A})$ and $u_t(t)\in \mathcal D(A^p e^{\t(t) \mathscr A})$, for all $t\in [0,T]$.
\end{theorem}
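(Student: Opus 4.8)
The plan is to run a Gevrey energy estimate directly on the quantity measuring the analytic content of $u$ and $u_t$. First I would rewrite \eqref{wave-eqn-1} in the operator form $u_{tt}+A^2u+f(t,x,u)=0$ (since $\Box u+u=u_{tt}-\Delta u+u=u_{tt}+A^2u$), and introduce the Gevrey energy
\begin{align*}
y(t):=\norm{A^p e^{\t(t)\mathscr A}u_t(t)}^2+\norm{A^{p+1}e^{\t(t)\mathscr A}u(t)}^2,
\end{align*}
whose two pieces are exactly the norms appearing in the conclusion. Since $e^{\t\mathscr A}$ is a Fourier multiplier commuting with $A$, I would justify the differentiation of $y$ rigorously through a Galerkin truncation (projecting onto $|j|\le N$, where all the norms involved are finite), derive the identity at the truncated level, and pass to the limit.

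Differentiating, and using $\frac{d}{dt}e^{\t\mathscr A}=\t'\mathscr A e^{\t\mathscr A}$ together with $u_{tt}=-A^2u-f$, the linear cross terms $\pm(A^{p+1}e^{\t\mathscr A}u,A^{p+1}e^{\t\mathscr A}u_t)$ cancel, leaving
\begin{align*}
\tfrac12 y'(t)=\t'(t)\Big[\norm{A^p\mathscr A^{1/2}e^{\t\mathscr A}u_t}^2+\norm{A^{p+1}\mathscr A^{1/2}e^{\t\mathscr A}u}^2\Big]-\big(A^p e^{\t\mathscr A}f,\,A^p e^{\t\mathscr A}u_t\big).
\end{align*}
Because $\t'=-\t^3h\le0$, the bracketed term is a favorable (nonpositive) damping term carrying one extra half-derivative in the $x'$ variables; I would keep it in order to absorb part of the nonlinear contribution.

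The \textbf{main obstacle} is the nonlinear estimate of $\norm{A^p e^{\t\mathscr A}f(t,\cdot,u)}$. The key structural fact is that $\mathcal D(A^p e^{\t\mathscr A})$ is a Banach algebra for $p>\frac n2$: this follows from $H^p$ being an algebra for $p>\frac n2$, combined with the submultiplicativity $e^{\t|(m+j)'|}\le e^{\t|m'|}e^{\t|j'|}$, i.e. the triangle inequality for $j'$ highlighted in the introduction. From the algebra property I get $\norm{A^p e^{\t\mathscr A}u^k}\le c^{\,k-1}\norm{A^p e^{\t\mathscr A}u}^k$; then, expanding $f=\sum_{j,k}a_{jk}(t)u^k e^{ij\cdot x}$, shifting Fourier modes by $j$ (which costs a factor $(1+|j|^2)^{p/2}e^{\t|j'|}$) and invoking $\t(t)\le\t_0\le\l$ so that $e^{\t|j'|}\le e^{\l|j'|}$, I would sum the resulting series against the majorant \eqref{def-g} to obtain a clean bound of the form
\begin{align*}
\norm{A^p e^{\t\mathscr A}f(t,\cdot,u)}\le C\,g\big(t,\,c\norm{A^p e^{\t\mathscr A}u}\big),
\end{align*}
the right-hand side being finite because $g$ converges for all real arguments by Assumption \ref{ass-1}. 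Tracking the growth of the algebra constant through the powers $u^k$, and verifying that the rescaling by $c$ is harmless, is the delicate bookkeeping here.

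Finally I would close the estimate. The argument of $g$ is controlled by the elementary inequality $e^{a}(1-a)\le1$ for $a\ge0$, which at the Fourier level yields
\begin{align*}
\norm{A^p e^{\t\mathscr A}u}^2\le\norm{u}_{H^p}^2+2\t\,\norm{A^{p+1}\mathscr A^{1/2}e^{\t\mathscr A}u}^2,
\end{align*}
splitting the analytic part of the nonlinearity into an a priori controlled $H^p$-piece (finite since $u\in C([0,T];H^p)$) and a remainder carrying a factor of $\t$ that can be absorbed into the damping term. Balancing these contributions via Cauchy--Schwarz and Young's inequality is precisely what dictates the cubic law $\t'=-\t^3h$ and the definition of $h$ in terms of $\norm{u}_{H^p}$ and the initial Gevrey norms; integrating it gives the explicit $\t(t)=(2\int_0^t h+\t_0^{-2})^{-1/2}>0$ on $[0,T]$. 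The resulting differential inequality for $y$ then furnishes a uniform bound on $[0,T]$, which a standard continuity/bootstrap argument (starting from $y(0)<\infty$) makes rigorous; by the assumed uniqueness this Gevrey solution coincides with the given $H^p$ solution, proving $u(t)\in\mathcal D(A^{p+1}e^{\t(t)\mathscr A})$ and $u_t(t)\in\mathcal D(A^p e^{\t(t)\mathscr A})$ throughout $[0,T]$.
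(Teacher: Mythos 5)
Your overall architecture matches the paper's: Galerkin truncation, the Gevrey energy $\norm{A^pe^{\tau\mathscr A}u_t}^2+\norm{A^{p+1}e^{\tau\mathscr A}u}^2$, the Banach algebra property of $\mathcal D(A^pe^{\tau\mathscr A})$ combined with the majorant to get $\norm{A^pe^{\tau\mathscr A}f}\le C_1\,g(t,C_0\norm{A^pe^{\tau\mathscr A}u})$ under $\tau\le\lambda$, and absorption of the dangerous part into the $\tau'$-damping term. The gap is in the closing step, and it is not mere bookkeeping. Your elementary inequality $e^{a}(1-a)\le 1$ extracts only \emph{one} factor of $|j'|$, hence a single power of $\tau$, so the best differential equation it can support is $\tau'=-\tau\,\tilde h$ (exponential decay of the radius), not the claimed cubic law $\tau'=-\tau^3h$ with $h$ as in (\ref{def-h}); absorption requires the coefficient of the damping norm to be dominated by $-\tau'$, and a coefficient of size $\tau$ cannot be dominated by $\tau^3h$ with $h$ independent of $\tau$. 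To produce $\tau^3$ one must exploit the full gap between the level $A^p$, at which the nonlinearity is measured, and the damping level $A^{p+1}\mathscr A^{1/2}$: the paper uses $e^{2x}\le e^2+x^3e^{2x}$ and then $|j'|^3\le(1+|j|^2)|j'|$, so that
\begin{align*}
\norm{A^pe^{\tau\mathscr A}u}^2\le e^2\norm{u}_{H^p}^2+\tau^3\norm{A^p\mathscr A^{\frac32}e^{\tau\mathscr A}u}^2,
\qquad
\norm{A^p\mathscr A^{\frac32}e^{\tau\mathscr A}u}\le\norm{A^{p+1}\mathscr A^{\frac12}e^{\tau\mathscr A}u}.
\end{align*}
Since the theorem, and the paper's later applications (e.g.\ the $1/t$ lower bound for the cubic Klein--Gordon equation), hinge on the cubic law, your argument proves a strictly weaker statement.

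Equally important, $g$ is an infinite power series, so you must bound $\norm{A^pe^{\tau\mathscr A}u}^k$ for \emph{every} $k$ in a form where (i) the damping norm appears to power exactly $2$ (it is only available linearly, squared, in the energy inequality), (ii) the leftover factor is controlled by the energy itself, and (iii) the power of $\tau$ is independent of $k$. Your sketch does not address this. Raising your $k=2$ inequality to the power $k/2$ produces $\tau^{k/2}\norm{A^{p+1}\mathscr A^{\frac12}e^{\tau\mathscr A}u}^{k}$, whose excess factor $\norm{A^{p+1}\mathscr A^{\frac12}e^{\tau\mathscr A}u}^{k-2}$ is \emph{not} bounded by the energy (it carries the extra $\mathscr A^{\frac12}$); and even after rewriting to fix that, the $k$-dependent power $\tau^{k/2}$ can only be uniformized to $\tau^1$ (the worst case $k=2$), again yielding the exponential law. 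The paper resolves precisely this point with the $k$-dependent inequality $e^{2x}\le e^2+x^{6/k}e^{2x}$ followed by the discrete H\"older inequality, see (\ref{e-8'})--(\ref{e-9}), which gives, uniformly in $k\ge2$,
\begin{align*}
\norm{A^pe^{\tau\mathscr A}u}^k\le 2^{\frac{k-2}{2}}\Big(e^k\norm{u}^k_{H^p}+\tau^3\norm{A^pe^{\tau\mathscr A}u}^{k-2}\norm{A^p\mathscr A^{\frac32}e^{\tau\mathscr A}u}^2\Big),
\end{align*}
with a separate treatment of $k=1$ as in (\ref{e-9'}). This uniform-in-$k$ splitting, with the damping norm squared and a fixed $\tau^3$, is the missing idea; without it the stated ODE (\ref{radius-1}) and the definition (\ref{def-h}) of $h$ cannot be reached.
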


\begin{remark}
Theorem \ref{thm-1} shows the solution of (\ref{wave-eqn-1}) remains spatial analytic, with respect to some of its spatial variables, during its whole life-span, provided the initial data is analytic with respect to these spatial variables.

Notice that the parameters $\sigma$ and $\lambda$ represent, respectively, the radius of analyticity of the initial data and the nonlinearity $f(t,x,u)$ for fixed $t$ and $u$. (\ref{radius-1}) shows the analytic regularity of the solution is influenced by both $\sigma$ and $\lambda$. In Assumption \ref{ass-1} we suppose $\lambda$ is a constant; nevertheless, we can instead assume $\l$ depends on $t$, and under this more general setting, one may derive $\t(t)\leq \l(t)$ for all $t\in [0,T]$.

Also it is clear from Theorem \ref{thm-1} that, if $g(t,s)$, the majorising function of the nonlinearity $f$, is algebraic, and $\norm{u(t)}_{H^p}$ is bounded above by some algebraic increasing function, then the radius of analyticity of the solution $u(t)$ shrinks at most algebraically fast. To demonstrate this idea,  we will consider later, as an example,  the three-dimensional defocusing Klein-Gordon equation (cubic nonlinear wave equation),  (\ref{cubic-wave}), for which it is known that the $H^2-$norm remains bounded. In particular, we will obtain, by applying our method to this explicit example,  a lower bound for  the radius of analyticity, $\tau(t)$, of the solution that behaves like  $ \frac{1}{t}$, for large $t$. Hence the radius of analyticity in this case cannot decay faster than algebraic.
\end{remark}

\begin{proof}
The proof draws ideas from \cite{Ferrari-Titi-98, Larios-Titi-10, Levermore-Oliver-97, Oliver-Titi-00}, all of which are based on the tools developed in   \cite{Foias-Temam-89} (see also \cite{Kreiss}). We establish our result by employing the Galerkin method. Denote by $P_N$ the $L^2$ projection onto the span of
$\{e^{i j\cdot x}\}_{|j|\leq N}$. Let ${u_N}(t)=\sum_{|j|\leq N}u_{N,j}(t) e^{i j \cdot x}$ be the solutions of the Galerkin system associated with the initial-value problem (\ref{wave-eqn}), i.e., ${u_N}$ satisfies
\begin{align} \label{Galerkin}
\Box u_N+u_N+P_N f(t,x,{u_N})=0
\end{align}
with the initial condition ${u_N}(0)=P_N u_0$ and $u_N'(0)=P_N u_1$.
Clearly, (\ref{Galerkin}) generates a second-order $(2N+1)$-dimensional  system of ordinary differential equations with continuous nonlinearity in the unknown functions $u_{N,j}(t)$. By the Cauchy-Peano Theorem, for every $N\geq 1$ system (\ref{Galerkin}) has a solution ${u_N}(t)$ on $[0,T_N]$ with $u_{N,j}(t)\in C^2[0,T_N]$, for $|j|\leq N$.

In what follows we focus our analysis on the interval $[0,T_N]$.
In order to derive an a-priori estimate for $u_N$,
we follow the standard approach in \cite{Ferrari-Titi-98, Foias-Temam-89, Larios-Titi-10}. Applying the operator $A^p e^{\t \mathscr A}$ to both sides of (\ref{Galerkin}) and taking the $L^2$ inner product with $A^p e^{\t \mathscr A} u_N'$, one has
\begin{align} \label{e-1}
(A^p e^{\t \mathscr A}u_N'', \;&A^{p} e^{\t \mathscr A}u_N')+(A^p e^{\t \mathscr A} A^2{u_N}, A^pe^{\t \mathscr A}u_N') \notag\\
&+(A^pe^{\t \mathscr A}P_N f(t,x,{u_N}), A^pe^{\t \mathscr A}u_N')=0.
\end{align}
It is clear that
\begin{align} \label{e-2}
&\frac{1}{2}\frac{d}{dt}\norm{A^pe^{\t \mathscr A}u_N'}^2 \notag\\
&=(A^pe^{\t \mathscr A}u_N'',A^pe^{\t \mathscr A}u_N')+\t'(t)\norm{A^p\mathscr A^{\frac{1}{2}}e^{\t \mathscr A}u_N'}^2,
\end{align}
and
\begin{align} \label{e-3}
&\frac{1}{2}\frac{d}{dt}\norm{A^{p+1}e^{\t \mathscr A}{u_N}}^2 \notag\\
&=(A^{p+1}e^{\t \mathscr A}{u_N}, A^{p+1}e^{\t \mathscr A}u_N')+\t'(t)\norm{A^{p+1}\mathscr A ^{\frac{1}{2}}e^{\t \mathscr A}{u_N}}^2.
\end{align}
Substituting (\ref{e-2}) and (\ref{e-3}) into (\ref{e-1}) gives
\begin{align} \label{e-6}
&\frac{1}{2}\frac{d}{dt}\left(\norm{A^pe^{\t \mathscr A}u_N'}^2+\norm{A^{p+1}e^{\t \mathscr A}{u_N}}^2\right)\notag\\
&\leq \t'(t)\left(\norm{A^{p} \mathscr A^{\frac{1}{2}} e^{\t \mathscr A}u_N'}^2
+\norm{A^{p+1} \mathscr A^{\frac{1}{2}}e^{\t \mathscr A}{u_N}}^2\right) \notag\\
&+\norm{A^pe^{\t \mathscr A}P_N f(t,x,{u_N})}\norm{A^pe^{\t \mathscr A}u_N'}.
\end{align}

Now we compute the Gevrey norm of $f(t,x,u_N)$. Recall from Assumption \ref{ass-1},
we express $f(t,x,u)=\sum_{j\in \ints^n} \hat c_j(t,u) e^{i j\cdot x}$, where $\hat c_j(t,u)=\sum_{k=0}^{\infty}a_{jk}(t)u^k$. Also, Lemma \ref{lem-2} in the appendix shows that $\mathcal D(A^p e^{\t \mathscr A})$ is a Banach algebra, for $p>\frac{n}{2}$, so
\begin{align} \label{e-18}
&\norm{A^p e^{\t \mathscr A} f(t,x,{u_N(x)})} \notag\\
&\leq \sum_{j\in \ints^n} \norm{A^p e^{\t \mathscr A} \left[\hat c_j(t,u_N(x))e^{ij\cdot x}\right]}\notag\\
&\leq \sum_{j\in \ints^n} C_0
\norm{A^p e^{\t \mathscr A} \hat c_j(t,{u_N(x)})}
\norm{A^p e^{\t \mathscr A} e^{ij\cdot x}}.
\end{align}
Let us mention Lemma 2 in \cite{Ferrari-Titi-98}, which states if $u\in \mathcal D(A^p e^{\t A})$ and $F$
be an analytic function with a majorising function $g_0$, then $F(u)\in \mathcal D(A^p e^{\t A})$ and
$\norm{A^p e^{\t A} u}\leq (1+C_p^{-1})g_0(C_p \norm{A^p e^{\t A}u})$.
By analogy with this result we derive
\begin{align} \label{e-4}
\norm{A^p e^{\t \mathscr A} \hat c_j(t,u)}\leq
C_1 \sum_{k=0}^{\infty} |a_{jk}(t)|C_0^{k-1}\norm{A^p e^{\t \mathscr A}u}^k.
\end{align}
Clearly,
\begin{align} \label{e-19}
\norm{A^p e^{\t \mathscr A}e^{ij\cdot x}}=(1+|j|^2)^{\frac{p}{2}} e^{\t|j'|}.
\end{align}
If we require $\t(t)\leq \lambda$, for all $t\in [0,T_N]$, then combining (\ref{e-18})-(\ref{e-19}) yields
\begin{align} \label{e-5}
&\norm{A^p e^{\t \mathscr A} f(t,x,{u_N})} \notag\\
&\leq C_1 \sum_{j\in \ints^n}
\sum_{k=0}^{\infty} |a_{jk}(t)|C_0^k \norm{{A^p e^{\t \mathscr A}u_N}}^k   (1+|j|^2)^{\frac{p}{2}} e^{\lambda |j'|}\notag\\
&=C_1 g(t,C_0\norm{A^p e^{\t \mathscr A}u_N})<\infty,
\end{align}
due to Assumption \ref{ass-1}.

We proceed to estimate the nonlinear term $g(t,C_0\norm{A^p e^{\t \mathscr A}u_N})$.
Similar to Lemma 8 in \cite{Oliver-Titi-00}, by using the elementary inequality $e^{2x}\leq e^2+x^{\ell} e^{2x}$ for all $x\geq 0$, $\ell>0$, we deduce (taking $\ell=3$)
\begin{align} \label{e-8}
&\norm{A^p e^{\t \mathscr A}u_N}^2
=\sum_{|j|\leq N}|u_{N,j}|^2(1+|j|^2)^p e^{2\t|j'|} \notag\\
&\leq e^2\sum_{|j|\leq N}|u_{N,j}|^2(1+|j|^2)^p
+\t^3\sum_{|j|\leq N}|u_{N,j}|^2(1+|j|^2)^p |j'|^3 e^{2\t|j'|}\notag\\
&= e^2\norm{{u_N}}_{H^p}^2+
\t^3\norm{A^p \mathscr A^{\frac{3}{2}} e^{\t \mathscr A}u_N}^2.
\end{align}

Inspired by (\ref{e-8}), we intend to obtain a similar estimate for $\norm{A^p e^{\t \mathscr A}u_N}^k$ for any integer $k\geq 1$. In fact,
\begin{align} \label{e-8'}
&\norm{A^p e^{\t \mathscr A}u_N}^k=\Big(\sum_{|j|\leq N}|u_{N,j}|^2(1+|j|^2)^p e^{2\t|j'|}\Big)^{\frac{k}{2}} \notag\\
&\leq \Big(e^2\sum_{|j|\leq N}|u_{N,j}|^2(1+|j|^2)^p
+\t^{\frac{6}{k}}\sum_{|j|\leq N}|u_{N,j}|^2(1+|j|^2)^p |j'|^{\frac{6}{k}} e^{2\t|j'|}\Big)^{\frac{k}{2}}.
\end{align}
By the discrete H\"older's inequality it follows
\begin{align} \label{Holder}
&\sum_{|j|\leq N}|u_{N,j}|^2(1+|j|^2)^p |j'|^{\frac{6}{k}} e^{2\t|j'|}\notag\\
&\leq \Big(\sum_{|j|\leq N}|u_{N,j}|^2(1+|j|^2)^{p}e^{2\t|j'|}\Big)^{\frac{k-2}{k}}
\Big(\sum_{|j|\leq N}|u_{N,j}|^2(1+|j|^2)^p |j'|^3 e^{2\t|j'|}\Big)^{\frac{2}{k}}\notag\\
&\leq \norm{A^p e^{\t \mathscr A}u_N}^{\frac{2(k-2)}{k}}\norm{A^p \mathscr A^{\frac{3}{2}} e^{\t \mathscr A}{u_N}}^{\frac{4}{k}}.
\end{align}
A combination of (\ref{e-8'}) and (\ref{Holder}) yields
\begin{align} \label{e-9}
\norm{A^p e^{\t \mathscr A}{u_N}}^k
\leq 2^{\frac{k-2}{2}}\Big(e^k\norm{{u_N}}^k_{H^p}+\t^3\norm{A^p e^{\t \mathscr A}{u_N}}^{k-2}
\norm{A^p \mathscr A^{\frac{3}{2}} e^{\t \mathscr A}{u_N}}^2\Big)
\end{align}
for all $k\geq 2$.

Notice that (\ref{e-9}) is not valid for $k=1$. To deal with the case $k=1$, we simply let $k=2$ in (\ref{e-9}) followed by taking the square root, obtaining
\begin{align} \label{e-9'}
\norm{A^p e^{\t \mathscr A}u_N}&\leq e \norm{{u_N}}_{H^p}+\t^{\frac{3}{2}}\norm{A^p \mathscr A^{\frac{3}{2}}e^{\t \mathscr A}{u_N}}\notag\\
&\leq e\norm{{u_N}}_{H^p}+\frac{1}{2}\t^3\norm{A^p \mathscr A^{\frac{3}{2}}e^{\t \mathscr A}{u_N}}^2+\frac{1}{2}
\end{align}
where Cauchy's inequality has been used.

Applying estimates (\ref{e-9}) and (\ref{e-9'}) together with the definition (\ref{def-g}) of $g$ shows
\begin{align} \label{e-10}
g(t,C_0\norm{A^p e^{\t \mathscr A}u_N})&\leq g(t,C_0(\sqrt{2}e\norm{{u_N}}_{H^p}+1))\notag\\
&+\t^3 g(t,C_0(\sqrt 2 \norm{A^p e^{\t \mathscr A}u_N}+1))
\norm{A^p \mathscr A^{\frac{3}{2}} e^{\t \mathscr A}{u_N}}^2.
\end{align}

Obviously, $\norm{A^p \mathscr A^{\frac{3}{2}} e^{\t \mathscr A}{u_N}}
\leq \norm{A^{p+1} \mathscr A^{\frac{1}{2}}e^{\t \mathscr A}{u_N}}$.
Thus, combining (\ref{e-6}), (\ref{e-5}) and (\ref{e-10}) yields
\begin{align} \label{e-11}
&\frac{1}{2}\frac{d}{dt}\left(\norm{A^p e^{\t \mathscr A}u_N'}^2+\norm{A^{p+1}e^{\t \mathscr A}{u_N}}^2\right) \notag\\
&\leq C_1 g(t,C_0(\sqrt{2}e\norm{{u_N}}_{H^p}+1))\norm{A^p e^{\t \mathscr A}u_N'} \notag\\
&+\Big[\t'+\t^3 C_1 g(t,C_0(\sqrt 2 \norm{A^p e^{\t \mathscr A}u_N}+1))\norm{A^p e^{\t \mathscr A}u_N'}\Big]\notag\\
&\hspace{1 in} \times \Big(\norm{A^p \mathscr A^{\frac{1}{2}}e^{\t \mathscr A}u_N'}^2+\norm{A^{p+1} \mathscr A^{\frac{1}{2}}e^{\t \mathscr A}{u_N}}^2\Big)
\end{align}
for all $t\in [0,T_N]$.

In order to use the estimate (\ref{e-11}) to obtain the analytic regularity of the solutions, we need to
pass to the limit as $N\rightarrow \infty$.
Therefore we shall study the convergence of $u_N$ to the solution $u$. Indeed, if we let $\t=0$ in (\ref{e-6}) and (\ref{e-5}), then it follows that
\begin{align} \label{e-30}
&\frac{1}{2}\frac{d}{dt}\left(\norm{u_N'}^2_{H^p}+\norm{u_N}^2_{H^{p+1}}\right) \leq C_1 g(t,C_0 \norm{{u_N}}_{H^p})\norm{u_N'}_{H^p}.
\end{align}
Define $U_N:=(u_N,u_N')$ and $U_N(0)=U_0:=(u_0,u_1)$. Also set $\mathcal H:=H^{p+1}\times H^{p}$.
Then (\ref{e-30}) is reduced to
\begin{align} \label{e-30'}
\frac{d}{dt} \norm{U_N(t)}_{\mathcal H}\leq C_1 g(t,C_0 \norm{{U_N}}_{\mathcal H}).
\end{align}
Integrating (\ref{e-30'}) on $[0,t]\subset [0,T_N]$ gives
\begin{align} \label{e-31}
\norm{U_N(t)}_{\mathcal H}\leq \norm{U_N(0)}_{\mathcal H} + C_1 \int_0^t  g(s,C_0 \norm{U_N(s)}_{\mathcal H}) ds.
\end{align}
 Since $\norm{U_N(0)}_{\mathcal H}\leq \norm{U_0}_{\mathcal H}$, by the continuity of $\norm {U_N(t)}_{\mathcal H}$, there exists a $T_N^*\in [0,T_N]\subset [0,T]$ such that $\norm{U_N(t)}_{\mathcal H}\leq \norm{U_0}_{\mathcal H}+1$, for all $t\in [0,T_N^*]$. Thus, by (\ref{e-31}) it follows that for all $t\in [0,T_N^*]$,
\begin{align} \label{e-32}
&\norm{U_N(t)}_{\mathcal H} \leq
\norm{U_0}_{\mathcal H}+ C_1 t \max_{s\in [0,T]} g\left(s,C_0 (\norm{U_0}_{\mathcal H}+1)\right).
\end{align}
Note the right hand side of (\ref{e-32}) is finite since $g$ is continuous in its arguments.

In order to see that $T_N^*$ does not approach 0 as $N\rightarrow \infty$, we demand the right-hand side of (\ref{e-32}) to be smaller than or equal to $\norm{U_0}_{\mathcal H}+1$, then we have the inequality (\ref{e-32}) holds for all $t\in [0,T^*]$ where
\begin{align} \label{e-33}
T^*=\min\left\{\frac{1}{C_1 \max_{s\in [0,T]} g\left(s,C_0 (\norm{U_0}_{\mathcal H}+1)\right)},\;\;T\right\}.
\end{align}
Thus $0<T^*\leq T_N^*$, for all $N$, and
\begin{align} \label{e-34}
\norm{U_N(t)}_{\mathcal H}\leq \norm{U_0}_{\mathcal H}+1 \text{\;\;on\;\;} [0,T^*]
\end{align}
for all $N$. Moreover, by (\ref{Galerkin}) and (\ref{e-34}) one has $u''_N$ are uniformly bounded in $C([0,T^*];H^{p-1})$, and due to the uniform bound (\ref{e-34}) of the $\mathcal H$-norm, there exist $\tilde U:=(\tilde u,\tilde u')\in \mathcal H$ such that $U_N \rightarrow \tilde U$ weak$-*$ in $L^{\infty}(0,T^*;\mathcal H)$, and $u_N'' \rightarrow \tilde u''$ weak$-*$ in $L^{\infty}(0,T^*;H^{p-1})$. Then, it follows by the Aubin's Compactness Theorem \cite{Simon-87} that on a subsequence $u_N\rightarrow \tilde u$ strongly in $C([0,T^*];H^{p})$.

Next we show that $\tilde u$ is a solution of (\ref{wave-eqn-1}) on $[0,T^*]$. For an arbitrary $\phi\in H^1$, we obtain from (\ref{Galerkin}) that
\begin{align*}
\langle u_N'',\phi \rangle +(A u_N,A\phi)+(P_N f(t,x,u_N),\phi)=0.
\end{align*}
To see the convergence of the nonlinearity, we consider
\begin{align} \label{e-35}
\norm{P_N f(t,x,u_N)-f(t,x,\tilde u)}
&\leq \norm{P_N f(t,x,u_N)-P_N f(t,x,\tilde u)} \notag\\
&+\norm{P_N f(t,x,\tilde u)-f(t,x,\tilde u)}.
\end{align}
Now, we estimate the right-hand side of (\ref{e-35}). Since $H^p \hookrightarrow L^{\infty}$, for $p>\frac{n}{2}$, we obtain from (\ref{e-34}) that there exists $C>0$ such that $\sup_{0\leq t\leq T^*}\norm{u_N(t,x)}_{L^{\infty}}\leq C$, for all $N$ and $\sup_{0\leq t\leq T^*}\norm{u(t,x)}_{L^{\infty}}\leq C$. In addition, since $\sum_{k=0}^{\infty}\sum_{j\in \ints^n}|a_{jk}(t)|s^k$ converges for all $s\in \reals$, it follows that
$\sum_{k=0}^{\infty}\sum_{j\in \ints^n}|a_{jk}(t)|k s^{k-1}$ also converges for all $s\in \reals$. Hence, for all $t\in [0,T^*]$,
\begin{align} \label{e-36}
&\norm{P_N f(t,x,u_N)-P_N f(t,x,\tilde u)} \notag\\
&\leq \left[\int_{{\mathbb{T}^n}}   \left(\sum_{j\in \ints^n}|\hat c_j(t,u_N)-\hat c_j(t,\tilde u)|\right)^2 dx \right]^{\frac{1}{2}} \notag\\
&\leq \norm{u_N-\tilde u} \sum_{j\in \ints^n} \max_{|s|\leq C} \left|\frac{d}{ds}\hat c_j(t,s)\right|\notag\\
&\leq \norm{u_N-\tilde u}\sum_{j\in \ints^n}\sum_{k=0}^{\infty}|a_{jk}(t)|k C^k \longrightarrow 0 \, ,
\end{align}
as $N\rightarrow \infty$, where we have used the Mean Value Theorem. Combining (\ref{e-35}) and (\ref{e-36}) shows
$(P_N f(t,x,u_N),\phi)\rightarrow (f(t,x,\tilde u),\phi)$. Thus $\tilde u$ is a solution.
But by the assumption $u$ is the unique solution, and hence one must have $\tilde u=u$ on $[0,T^*]$. It follows that $u_N\rightarrow u$ strongly in $C([0,T^*];H^p)$.

If $T^*<T$, then we let the time begins at $t=T^*$ and make the extension by reiterating the previous argument. By the formula (\ref{e-33}) of $T^*$, it is clear that after \emph{finite} number of steps, we obtain a sequence ${u_N}\rightarrow u$ strongly in $C([0,T];H^p)$.

So there exists $N'\in \naturals$ such that
$\sqrt{2}e\norm{{u_N}(t)}_{H^p}\leq \sqrt{2}e\norm{u(t)}_{H^p}+1$, on $[0,T]$, if $N\geq N'$. Due to the fact $g(t,s)$ is increasing for every fixed $t$, it follows that
\begin{align} \label{ok-3}
g(t,C_0(\sqrt{2}e\norm{{u_N}(t)}_{H^p}+1))
\leq g(t,C_0(\sqrt{2}e\norm{u(t)}_{H^p}+2)) \, ,
\end{align}
on $[0,T]$, for $N\geq N'$.

Now we return to the Gevrey norm estimate (\ref{e-11}). By the above arguments, we know (\ref{e-11}) is valid on $[0,T]$, and thus, if we let $\t_N(t)$ be the solution of the ODE
\begin{align} \label{ok-6}
\t_N'+\t_N^3 C_1 g(t,C_0(\sqrt 2 \norm{A^p e^{\t_N \mathscr A}u_N}+1))\norm{A^p e^{\t_N \mathscr A}u_N'}=0 \, ,
\end{align}
with $\t_N(0)=\tau_0=\min\{\sigma,\lambda\}$, for all $t\in [0,T]$, then it follows that
\begin{align} \label{ok-1}
&\frac{1}{2}\frac{d}{dt}\left(\norm{A^p e^{\t_N \mathscr A}u_N'}^2+\norm{A^{p+1}e^{\t_N \mathscr A}{u_N}}^2\right) \notag\\
&\leq C_1 g(t,C_0(\sqrt{2}e\norm{{u_N}}_{H^p}+1))\norm{A^p e^{\t_N \mathscr A}u_N'}.
\end{align}
Define
\begin{align} \label{ok-2}
{Y_N}(t):=\Big(\norm{A^p e^{\t_N(t) \mathscr A}u_N'(t)}^2+\norm{A^{p+1}e^{\t_N(t) \mathscr A}{u_N(t)}}^2\Big)^{\frac{1}{2}}.
\end{align}
Then, (\ref{ok-1}) reads
\begin{align*}
Y_N(t)Y_N'(t)\leq C_1 g(t,C_0(\sqrt{2}e\norm{{u_N}}_{H^p}+1)) Y_N(t), \;\;\;\;t\in [0,T],
\end{align*}
and along with (\ref{ok-3}), one has
\begin{align} \label{ok-4}
Y_N(t) \leq Y_0+C_1 \int_0^t  g(s,C_0(\sqrt{2}e\norm{{u(s)}}_{H^p}+2)) ds :=\xi(t)\, ,
\end{align}
for $t\in [0,T]$, $N\geq N'$, where $Y_0=(\norm{A^p e^{\tau_0 \mathscr A}u_1}^2+\norm{A^{p+1}e^{\tau_0 \mathscr A}u_0}^2 )^{\frac{1}{2}}$.

If we let $\t(t)$ satisfy the equation
\begin{align} \label{ok-5}
\t'(t)+C_1 \t^3(t) g(t,C_0(\sqrt 2 \xi(t) +1))\xi(t)=0 \;,\;\;\;\text{with} \; \t(0)=\t_0 \, ,
\end{align}
then by (\ref{ok-6}) and (\ref{ok-4}), we conclude $\t_N(t)\geq \t(t)$, for all $t\in [0,T]$.
It follows that
\begin{align*}
\Big(\norm{A^p e^{\t(t) \mathscr A}u_N'(t)}^2+\norm{A^{p+1}e^{\t(t) \mathscr A}{u_N(t)}}^2\Big)^{\frac{1}{2}}\leq Y_N(t)\leq \xi(t)\, ,
\end{align*}
for all $t\in [0,T]$. In order to write (\ref{ok-5}) in a more compact form, we set
\begin{align} \label{def-h}
h(t):=C_1g(t,C_0(\sqrt 2 \xi(t) +1))\xi(t)\, ,
\end{align}
where $\xi(t)$ is defined in (\ref{ok-4}). Then, the equation (\ref{ok-5}) reads $\t'(t)+\t^3(t) h(t)=0$
with $\t(0)=\t_0$, and its solution is given in (\ref{sol-tau}).

Finally, we shall obtain the analytic regularity of the solution $(u,u_t)$ by passing to the limit $N\rightarrow \infty$. Note
\begin{align*}
\norm{A^{p+1} e^{\t(t) \mathscr A}u_N(t)}^2
=\sum_{|j|\leq N} (1+|j|^2)^{p+1} e^{2 \t(t) |j'|} |u_{N,j}(t)|^2 \leq \xi(t)
\end{align*}
for any $N\geq N'$, $t\in [0,T]$. Thus, for every fixed number $N_0$, for all $t\in [0,T]$,
\begin{align*}
\sum_{|j|\leq N_0} (1+|j|^2)^{p+1} e^{2 \t(t) |j'|} |u_{j}(t)|^2
=\lim_{N\rightarrow \infty}\sum_{|j|\leq N_0} (1+|j|^2)^{p+1} e^{2 \t(t) |j'|} |u_{N,j}(t)|^2  \leq \xi(t).
\end{align*}
Note, in the above formula, we pass to the limit into finite sums and use the fact $u_N(t)\rightarrow u(t)$ in $H^p$, $p>\frac{n}{2}$, for every $t\in [0,T]$.
Therefore, since $N_0\geq 0$ is arbitrarily selected, $\norm{A^{p+1} e^{\t(t) \mathscr A}u(t)}\leq \xi(t)$ for all $t\in [0,T]$.
Similarly, one can show $\norm{A^{p} e^{\t(t) \mathscr A}u'(t)}\leq \xi(t)$ for all $t\in [0,T]$.
\end{proof}

\begin{remark}
In the proof of Theorem \ref{thm-1} we have essentially justified the existence of a solution of the initial value problem (\ref{wave-eqn-1}). The uniqueness of solutions can be obtained by routine arguments.
\end{remark}

\begin{remark}
Define $\tau_0=\min \{\sigma, \lambda\}$. If we set $\t(t)$ to be a constant $\tau_0$ in the inequality (\ref{e-6})
and (\ref{e-5}), then we obtain
$$\frac{d}{dt}y(t) \leq C_1 g(t,y(t))$$
where $y(t)=\sqrt{\norm{A^p e^{\tau_0 \mathscr A}u_N'(t)}^2+\norm{A^{p+1}e^{\tau_0 \mathscr A}{u_N(t)}}^2}$. Since $y(0)$ is finite, by the continuity of $g$, we see that $y(t)$ is finite for a short time $T'$. However $T'$ may be smaller than the life span $T$ of the solution. The bottom line is that the lower bound $\t(t)$ of the radius of analyticity of a solution can remain constant for a short time but need to decrease in order to prevent the blow up of the Gevrey norm.
\end{remark}

\smallskip

Next we consider the equation (\ref{wave-eqn}) with the general nonlinearity $f(t,x,u,\nabla u,u_t)$ which satisfies the following assumption:
\begin{assumption} \label{ass-2}
Let $$f(t,x,u, \nabla u,u_t)=\sum_{j\in \ints^n} \hat c_j(t,u,\nabla u,u_t) e^{i j\cdot x}$$
where $\hat c_j(t,u,\nabla u,u_t):=\sum_{\b}a_{j \b}(t)u^{\b_0}u_{x_1}^{\b_1}\cdots u_{x_n}^{\b_n}u_t^{\b_{n+1}}$, $\b=(\b_0,\b_1,\ldots,\b_n)\in \naturals^{n+2}_0$, $a_{j\b}(t)$ are continuous functions in $t$.
Suppose $f$ has a majorising function
$$g(t,s_0,s_1,\ldots,s_n,s_{n+1}):=\sum_{\b\in \naturals^{n+2}_0} b_{\b}(t) s_0^{\b_0}s_1^{\b_1}\cdots s_n^{\b_n} s_{n+1}^{\b_{n+1}}$$ converges for all $(s_0,s_1,\ldots,s_n,s_{n+1})\in \reals^{n+2}$, $t\geq 0$, where $b_{\b}(t):=\sum_{j\in \ints^n} |a_{j \b}(t)| (1+|j|^2)^{\frac{p}{2}} e^{\lambda |j'|}$,
$\lambda>0$.
\end{assumption}

\begin{remark}
Similar to Remark \ref{remark-1} one can see that Assumption \ref{ass-2} implies $f$ is continuous in $t$, real analytic (of special Gevrey class of regularity) in $(x_1,\ldots,x_m)\in \mathbb T^m$ , and real analytic (entire) with respect to the rest arguments. In particular, any nonlinear function $\tilde f(t,u,\nabla u,u_t)$ (independent of $x$), which is continuous in $t$ and real analytic (entire) in the other variables, satisfies Assumption \ref{ass-2}.
\end{remark}

The following result is concerned with the analytic regularity of solutions to the more general nonlinear wave equation (\ref{wave-eqn}).

\begin{theorem} \label{thm-2}
Let $u_0\in \mathcal D(A^{p+1}e^{\sigma\mathscr A})$ and $u_1\in \mathcal D(A^p e^{\sigma \mathscr A})$ where $p>\frac{n}{2}$ and
$\sigma>0$. Assume the initial-value problem (\ref{wave-eqn}) has a unique solution $u\in C([0,T];H^{p+1})$ with $u_t\in C([0,T];H^p)$, in the sense of Definition \ref{solution}.
Suppose $\t(t)$ is the solution of the differential equation
\begin{align*}
\t'(t)=-\t(t) \eta(t) \text{\;\;with\;\;} \t(0)=\t_0:=\min\{\sigma,\lambda\},
\end{align*}
that is,
\begin{align*}
\t(t)=\t_0 e^{-\int_0^t \eta (s)ds},
\end{align*}
where $\eta(t)\geq 0$, for all $t\in [0,T]$, defined in (\ref{def-eta}) below, depends on $\norm{u(t)}_{H^{p+1}}$, $\norm{u_t(t)}_{H^p}$, $\norm{A^{p+1}e^{\t_0 \mathscr A}u_0}$
and $\norm{A^p e^{\t_0 \mathscr A}u_1}$.
Then, $u(t)\in \mathcal D(A^{p+1}e^{\t(t)\mathscr A})$ and $u_t(t)\in \mathcal D(A^p e^{\t(t) \mathscr A})$, for all $t\in [0,T]$.
\end{theorem}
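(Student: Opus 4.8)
The plan is to reproduce, almost verbatim, the Galerkin argument in the proof of Theorem \ref{thm-1}, modifying only the estimate of the nonlinear term so as to account for the dependence of $f$ on $\nabla u$ and $u_t$. As in (\ref{e-1})--(\ref{e-6}), I would apply $A^pe^{\t\mathscr A}$ to the Galerkin system (\ref{Galerkin}) (now with nonlinearity $P_Nf(t,x,u_N,\nabla u_N,u_N')$) and pair with $A^pe^{\t\mathscr A}u_N'$, obtaining the same energy identity: the derivative of $\norm{A^pe^{\t\mathscr A}u_N'}^2+\norm{A^{p+1}e^{\t\mathscr A}u_N}^2$ is bounded by the nonpositive term $\t'(t)\,\mathcal E_N^2$, where $\mathcal E_N^2:=\norm{A^p\mathscr A^{1/2}e^{\t\mathscr A}u_N'}^2+\norm{A^{p+1}\mathscr A^{1/2}e^{\t\mathscr A}u_N}^2$, plus $\norm{A^pe^{\t\mathscr A}P_Nf}\,\norm{A^pe^{\t\mathscr A}u_N'}$. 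Writing $Y_N$ as in (\ref{ok-2}), I would use the Banach algebra property (Lemma \ref{lem-2}) together with the majorising function of Assumption \ref{ass-2}, and observe that every argument of $g$ is controlled by the full energy, namely $\norm{A^pe^{\t\mathscr A}u_{N,x_i}}\leq\norm{A^{p+1}e^{\t\mathscr A}u_N}\leq Y_N$, $\norm{A^pe^{\t\mathscr A}u_N}\leq Y_N$ and $\norm{A^pe^{\t\mathscr A}u_N'}\leq Y_N$. This lets me collapse the multivariable majorising function to the single variable one $G(t,s):=g(t,s,\ldots,s)$ (still convergent and increasing in $s$) and bound $\norm{A^pe^{\t\mathscr A}f}\leq C_1\,G(t,C_0Y_N)$, in direct analogy with (\ref{e-5}).

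The decisive difference from Theorem \ref{thm-1} lies in how one trades Gevrey regularity for powers of $\t$. Because $f$ now depends on the first derivatives, the relevant Gevrey norms live at the \emph{top} Sobolev order $p+1$ (e.g. $\norm{A^pe^{\t\mathscr A}u_{N,x_i}}\leq\norm{A^{p+1}e^{\t\mathscr A}u_N}$), so there is no spare Sobolev order left to be converted into extra powers of $\t$. Concretely, instead of taking $\ell=3$ in the elementary inequality $e^{2x}\leq e^2+x^{\ell}e^{2x}$ used in (\ref{e-8}), I would take $\ell=1$, which applied to $Y_N^k$ yields (with $S_N:=(\norm{u_N'}_{H^p}^2+\norm{u_N}_{H^{p+1}}^2)^{1/2}$, and the $k=1$ case handled as in (\ref{e-9'}))
\begin{align*}
Y_N^k\leq 2^{\frac{k-2}{2}}\Big(e^kS_N^k+\t\,Y_N^{k-2}\,\mathcal E_N^2\Big),\qquad k\geq2,
\end{align*}
and hence the analogue of (\ref{e-10}) with $\t^3$ replaced by $\t$. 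Feeding this into $\norm{A^pe^{\t\mathscr A}f}\leq C_1G(t,C_0Y_N)$ and multiplying by $\norm{A^pe^{\t\mathscr A}u_N'}\leq Y_N$, the entire top-order contribution appears in the form $\t\,C_1G\big(t,C_0(\sqrt2\,Y_N+1)\big)Y_N\,\mathcal E_N^2$, so the assembled inequality reads, as in (\ref{e-11}),
\begin{align*}
\tfrac12\tfrac{d}{dt}Y_N^2\leq \big[\t'+\t\,C_1G\big(t,C_0(\sqrt2\,Y_N+1)\big)Y_N\big]\mathcal E_N^2+C_1G\big(t,C_0(\sqrt2\,eS_N+1)\big)Y_N.
\end{align*}
Setting the bracket to zero produces the \emph{linear} ODE $\t'=-\t\,\eta$, in contrast to the cubic ODE (\ref{radius-1}); this single power of $\t$ is the precise origin of the possibly exponential (rather than algebraic) decay of the radius of analyticity.

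From here I would close exactly as in Theorem \ref{thm-1}: a continuity/bootstrap argument gives the a priori bound $Y_N(t)\leq\xi(t)$, the Gevrey norm $Y_N$ inside the bracket is replaced by $\xi$ (using that $G$ is increasing), $\eta(t):=C_1G\big(t,C_0(\sqrt2\,\xi(t)+1)\big)\xi(t)$ is then defined through $\xi$ and the Sobolev norms of the genuine solution, $\t(t)=\t_0e^{-\int_0^t\eta(s)ds}$ solves the ODE, and the final transfer of the bound to $(u,u_t)$ is obtained by passing to the limit in finite Fourier truncations. The one genuinely new requirement is the strengthened hypothesis $u\in C([0,T];H^{p+1})$, $u_t\in C([0,T];H^p)$: the Sobolev forcing $S_N$ now carries $\norm{u_N}_{H^{p+1}}$ coming from $\nabla u$, so one needs strong convergence $u_N\to u$ in $H^{p+1}$ (again via Aubin compactness \cite{Simon-87}) in order to dominate the forcing by the norms of the limiting solution. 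I expect the main obstacle to be the bookkeeping in the $\ell=1$ analogues of (\ref{e-9})--(\ref{e-10}): one must verify that the collapse to the single-variable majorising function $G$ is compatible with the Banach-algebra estimate for a genuinely multivariable analytic $f$, and that in every monomial exactly one factor is promoted to the top-order $\mathscr A^{1/2}$-weighted energy $\mathcal E_N$ (carrying the single power of $\t$) while all remaining factors are absorbed either into $S_N$ or into the a priori bound $\xi$, leaving no residual term of higher order in $\mathcal E_N$ nor any with a power of $\t$ below one.
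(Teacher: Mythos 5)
Your proposal is correct and reaches the stated conclusion, but it organizes the decisive nonlinear estimate genuinely differently from the paper, so the comparison is worth recording. Both arguments share the same skeleton (Galerkin approximation, Gevrey energy inequality (\ref{eqn-1}), trading Gevrey regularity for powers of $\tau$, a comparison ODE for $\tau$, passage to the limit), and both land on the linear equation $\tau'=-\tau\eta$. The paper, however, keeps the \emph{multivariable} majorising function: after (\ref{e-20})--(\ref{e-22}) it estimates each factor of each monomial separately, applying the elementary inequality with exponent $\frac{2}{\beta_k(n+2)}$ tailored to that factor (see (\ref{e-23})--(\ref{e-26})), so that each factor releases $\tau^{1/(n+2)}$ together with the power $\frac{2}{n+2}$ of the top-order energy; it then needs the multi-index bookkeeping $\alpha\in\{0,1\}^{n+2}$ and the bound $\left[\tau\,\mathcal{E}^2\right]^{|\alpha|/(n+2)}\leq 1+\tau\,\mathcal{E}^2$ to reassemble at most one full power of $\tau$, arriving at the functions $\kappa$ and $\psi$ of (\ref{kappa})--(\ref{psi}), whose arguments carry $(n+2)$-th powers. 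You instead bound every factor by the total Gevrey energy $Y_N$ at the outset and collapse $g$ to the one-variable series $G(t,s)=g(t,s,\dots,s)$ --- legitimate since the coefficients $b_\beta(t)\geq 0$ make $g$ monotone in each argument and $G$ still converges everywhere --- and then rerun the one-variable trade of Theorem \ref{thm-1} with $\ell=2/k$ in place of $\ell=6/k$; your inequality $Y_N^k\leq 2^{(k-2)/2}\left(e^kS_N^k+\tau\,Y_N^{k-2}\mathcal{E}_N^2\right)$ is exactly what the H\"older step (\ref{Holder}) gives when $|j'|^{2/k}e^{2\tau|j'|}$ is split as $(e^{2\tau|j'|})^{(k-2)/k}(|j'|e^{2\tau|j'|})^{2/k}$, and it correctly exploits your key observation that no spare Sobolev order is available, forcing the single power of $\tau$. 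What each approach buys: yours avoids the combinatorial bookkeeping entirely and produces an $\eta$ whose arguments are linear in $\xi$ rather than $(n+2)$-th powers (so, if anything, a slightly better lower bound for the radius), while the paper's factor-by-factor version keeps the separate roles of $\norm{u}_{H^{p+1}}$ and $\norm{u_t}_{H^p}$ visible in $\kappa$ and $\psi$; both forms of $\eta$ have exactly the dependencies asserted in the theorem.

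One small technical over-claim in your closing step: Aubin compactness does not yield strong convergence $u_N\to u$ in $C([0,T];H^{p+1})$; it gives compactness only in strictly weaker norms, such as $C([0,T];H^{p+1-\epsilon})$, because the embedding must be compact. To dominate $\norm{u_N}_{H^{p+1}}$ and $\norm{u_N'}_{H^p}$ by the norms of the limit solution, one should instead show that the Galerkin sequence is Cauchy in $C([0,T];H^{p+1}\times H^p)$ via an energy estimate for differences (which closes, the equation being semilinear), or otherwise rework that step. This does not affect the substance of your argument: the paper's own proof of Theorem \ref{thm-2} is explicitly formal on precisely this point, deferring the Galerkin justification to ``similar to Theorem \ref{thm-1}'', and you are in fact more explicit than the paper about where the strengthened hypothesis $u\in C([0,T];H^{p+1})$, $u_t\in C([0,T];H^p)$ enters.
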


\begin{remark}
By Theorem \ref{thm-2}, if the majorising function $g$ in Assumption \ref{ass-2} is an algebraic function, and the growth rates of $\norm{u(t)}_{H^{p+1}}$ and $\norm{u_t(t)}_{H^p}$, $p>\frac{n}{2}$, are not higher than algebraic, then the analyticity radius of $u(t)$ shrinks at most exponentially fast as $t\rightarrow \infty$. One may compare this result with Theorem \ref{thm-1} to see how the structure of the nonlinearity affects the lower bound $\t(t)$ of the radius of spatial analyticity. On the other hand, a satisfactory estimate of the radius of analyticity of $u$ depends on sharp estimates of the Sobolev norms $\norm{u(t)}_{H^{p+1}}$ and $\norm{u_t(t)}_{H^p}$, $p>\frac{n}{2}$.
\end{remark}

\begin{proof}
Note the following estimates are formal, which can be justified rigorously using the Galerkin method similar to the proof of Theorem \ref{thm-1}. Suppose $u$ is a solution of (\ref{wave-eqn}). By referring to (\ref{e-6}) we have
\begin{align} \label{eqn-1}
&\frac{1}{2}\frac{d}{dt}\left(\norm{A^pe^{\t \mathscr A}u_t}^2+\norm{A^{p+1}e^{\t \mathscr A}{u}}^2\right)\notag\\
&\leq \t'(t)\left(\norm{A^p \mathscr A^{\frac{1}{2}} e^{\t \mathscr A}u_t}^2+
\norm{A^{p+1} \mathscr A^{\frac{1}{2}} e^{\t \mathscr A}{u}}^2\right) \notag\\
&+\norm{A^pe^{\t \mathscr A}f(t,x,u,\nabla u,u_t)}\norm{A^p e^{\t \mathscr A}u_t}.
\end{align}
We shall evaluate the nonlinear term $\norm{A^pe^{\t \mathscr A}f(t,x,u,\nabla u,u_t)}$.

Like (\ref{e-18}) one has
\begin{align}  \label{e-20}
&\norm{A^p e^{\t \mathscr A} f(t,x,u,\nabla u,u_t)} \notag\\
&\leq \sum_{j\in \ints^n} C_0
\norm{A^p e^{\t \mathscr A} \hat c_j(t,u,\nabla u,u_t)}
\norm{A^p e^{\t \mathscr A} e^{ij\cdot x}}.
\end{align}

Recall $\hat c_j(t,u,\nabla u,u_t):=\sum_{\b}a_{j \b}(t)u^{\b_0}u_{x_1}^{\b_1}\cdots u_{x_n}^{\b_n}u_t^{\b_{n+1}}$, where $\b=(\b_0,\b_1,\ldots,\b_n)\in \naturals^{n+2}_0$. By Lemma \ref{lem-2} we obtain
\begin{align} \label{e-21}
&\norm{A^p e^{\t \mathscr A} \hat c_j(t,u,\nabla u,u_t)} \notag\\
&\leq \tilde C \sum_{\b} |a_{j\b}|C_0^{|\b|-1}\norm{A^p e^{\t \mathscr A}u}^{\b_0}
\prod_{k=1}^n \norm{A^p e^{\t \mathscr A}u_{x_k}}^{\b_k}
\norm{A^p e^{\t \mathscr A}u_{x_t}}^{\b_{n+1}}
\end{align}
where $|\b|=\sum_{k=0}^{n+1}\b_k$.

It follows from (\ref{e-19}), (\ref{e-20}) and (\ref{e-21}) that
\begin{align}  \label{e-22}
&\norm{A^p e^{\t \mathscr A} f(t,x,u,\nabla u,u_t)} \notag\\
&\leq \tilde C g\left(t,C_0\norm{A^p e^{\t \mathscr A}u},C_0\norm{A^p e^{\t \mathscr A}u_{x_1}},
\cdots,C_0\norm{A^p e^{\t \mathscr A}u_{x_n}},C_0\norm{A^p e^{\t \mathscr A}u_{t}}\right).
\end{align}
Next we estimate the right-hand side of (\ref{e-22}).

For $k=1,\ldots,n$ with $\b_k\geq 1$, similar to (\ref{e-8})-(\ref{e-9'}) we compute
\begin{align} \label{e-23}
&\norm{A^p e^{\t \mathscr A}u_{x_k}}^2
\leq \norm{A^{p+1} e^{\t \mathscr A}u}^2
=\sum_{j\in \ints^n}|u_j|^2(1+|j|^2)^{p+1}e^{2\t|j'|}\notag\\
&\leq e^2\sum_{j\in \ints^n}|u_j|^2(1+|j|^2)^{p+1} +\t^{\frac{2}{\b_{k}(n+2)}}\sum_{j\in \ints^n}|u_j|^2
(1+|j|^2)^{p+1}|j'|^{\frac{2}{\b_k(n+2)}}
e^{2\t|j'|}\notag\\
&\leq e^2\norm{u}^2_{H^{p+1}}+\t^{\frac{2}{\b_{k}(n+2)}}
\norm{A^{p+1}e^{\t \mathscr A}u}^{2\left(1-\frac{2}{\b_k(n+2)}\right)}
\norm{A^{p+1} \mathscr A^{\frac{1}{2}} e^{\t \mathscr A}u}^{\frac{4}{\b_k(n+2)}},
\end{align}
where the H\"older's inequality has been used.
Taking the square root on both sides of (\ref{e-23}) immediately gives
\begin{align*}
\norm{A^p e^{\t \mathscr A}u_{x_k}}\leq e\norm{u}_{H^{p+1}}+\t^{\frac{1}{\b_{k}(n+2)}}
\norm{A^{p+1}e^{\t \mathscr A}u}^{1-\frac{2}{\b_k(n+2)}}
\norm{A^{p+1} \mathscr A^{\frac{1}{2}} e^{\t \mathscr A}u}^{\frac{2}{\b_k(n+2)}}.
\end{align*}
Hence
\begin{align} \label{e-24}
&\norm{A^p e^{\t \mathscr A}u_{x_k}}^{\b_k}\notag\\
&\leq 2^{\b_k-1} \left(e^{\b_k}\norm{u}_{H^{p+1}}^{\b_k}+\t^{\frac{1}{n+2}}
\norm{A^{p+1}e^{\t \mathscr A}u}^{\b_k-\frac{2}{n+2}}
\norm{A^{p+1} \mathscr A^{\frac{1}{2}} e^{\t \mathscr A}u}^{\frac{2}{n+2}}\right),
\end{align}
for $\b_k\geq 1$ where $k=1,\ldots,n$.

Similarly, we have
\begin{align} \label{e-25}
&\norm{A^p e^{\t \mathscr A}u}^{\b_0}\notag\\
&\leq 2^{\b_0-1} \left(e^{\b_0}\norm{u}_{H^{p}}^{\b_0}+\t^{\frac{1}{n+2}}
\norm{A^{p}e^{\t \mathscr A}u}^{\b_0-\frac{2}{n+2}}
\norm{A^{p} \mathscr A^{\frac{1}{2}} e^{\t \mathscr A}u}^{\frac{2}{n+2}}\right),
\end{align}
for $\b_0 \geq 1$. Also, for $\b_{n+1}\geq 1$ one has
\begin{align} \label{e-26}
&\norm{A^p e^{\t \mathscr A}u_t}^{\b_{n+1}}\notag\\
&\leq 2^{\b_{n+1}-1} \left(e^{\b_{n+1}}\norm{u_t}_{H^{p}}^{\b_{n+1}}+\t^{\frac{1}{n+2}}
\norm{A^{p}e^{\t \mathscr A}u_t}^{\b_{n+1}-\frac{2}{n+2}}
\norm{A^p \mathscr A^{\frac{1}{2}} e^{\t \mathscr A}u_t}^{\frac{2}{n+2}}\right).
\end{align}

For the sake of notations, we denote
\begin{align} \label{def-gamma}
&\g_k := e^{\b_k}\norm{u}_{H^{p+1}}^{\b_k}, \;\;k=0, 1, \ldots, n ,\notag\\
&\g_{n+1} := e^{\b_{n+1}}\norm{u_t}_{H^{p}}^{\b_{n+1}},
\end{align}
and
\begin{align} \label{def-delta}
&\d_k:=(1+\norm{A^{p+1}e^{\t \mathscr A}u})^{\b_k-\frac{2}{n+2}},\;\;k=0, 1, \ldots, n,\notag\\
&\d_{n+1}:=(1+\norm{A^{p}e^{\t \mathscr A} u_t})^{\b_{n+1}-\frac{2}{n+2}}.
\end{align}
We remark that $\b_k$, $k=0,1,\ldots, n+1$ can be zero in (\ref{def-gamma})-(\ref{def-delta}).

Furthermore, we let $\a=(\a_0, \a_1,\ldots,\a_{n+1})$, where $\a_k=0$ or $1$, for all $k=0, 1,\ldots,n+1$. Also denote
$|\a|:=\sum_{k=0}^{n+1} \a_k$.

By (\ref{e-24})-(\ref{def-delta}) one has
\begin{align*}
&\norm{A^p e^{\t \mathscr A}u}^{\b_0}\prod_{k=1}^n \norm{A^p e^{\t \mathscr A}u_{x_k}}^{\b_k}\norm{A^p e^{\t \mathscr A}u_{t}}^{\b_{n+1}} \notag\\
&\leq 2^{|\b|-(n+2)}\sum_{\a} \left(
\Big[\t\Big(\norm{A^p \mathscr A^{\frac{1}{2}}e^{\t \mathscr A}u_t}^2+\norm{A^{p+1} \mathscr A^{\frac{1}{2}} e^{\t \mathscr A}u}^2\Big)\Big]^{\frac{|\a|}{n+2}}
\prod_{k=0}^{n+1}\gamma_k^{1-\a_k} \d_k^{\a_k} \right)\notag\\
&\leq 2^{|\b|-(n+2)}\prod_{k=0}^{n+1}\g_k \notag\\
&+2^{|\b|-(n+2)}\sum_{\a\not = \vec 0} \left(1+
\t\Big(\norm{A^p \mathscr A^{\frac{1}{2}} e^{\t \mathscr A}u_t}^2+\norm{A^{p+1} \mathscr A^{\frac{1}{2}} e^{\t \mathscr A}u}^2\Big)
\prod_{k=0}^{n+1}(\gamma_k^{1-\a_k} \d_k^{\a_k})^{\frac{n+2}{|\a|}} \right),
\end{align*}
for any $(\b_0,\ldots,\b_{n+1})\in \naturals^{n+2}_0$.
It follows that
\begin{align} \label{e-27}
&\tilde C g\left(t,C_0\norm{A^p e^{\t \mathscr A}u},C_0\norm{A^p e^{\t \mathscr A}u_{x_1}},
\cdots,C_0\norm{A^p e^{\t \mathscr A}u_{x_n}},C_0\norm{A^p e^{\t \mathscr A}u_{t}}\right)\notag\\
&\leq \tilde C\sum_{\b\in \naturals^{n+2}_0}b_{\b}(t) C_0^{|\b|} \Big(2^{|\b|}+2^{|\b|-(n+2)}\prod_{k=0}^{n+1}\g_k\Big) \notag\\
&\hspace{0.5 in}+\t \tilde C \sum_{\b\in \naturals^{n+2}_0}\left(b_{\b}(t)C_0^{|\b|} 2^{|\b|-(n+2)}
\sum_{\a\not = \vec 0}
\prod_{k=0}^{n+1}(\gamma_k^{1-\a_k} \d_k^{\a_k})^{\frac{n+2}{|\a|}} \right) \notag\\
&\hspace{1 in}\times
\Big(\norm{A^p \mathscr A^{\frac{1}{2}} e^{\t \mathscr A}u_t}^2+\norm{A^{p+1} \mathscr A^{\frac{1}{2}} e^{\t \mathscr A}u}^2\Big).
\end{align}

To estimate the right-hand side of (\ref{e-27}) we notice
\begin{align} \label{kappa}
&\tilde C\sum_{\b\in \naturals^{n+2}_0}b_{\b}(t) C_0^{|\b|} \Big(2^{|\b|}+2^{|\b|-(n+2)}\prod_{k=0}^{n+1}\g_k\Big)\notag\\
&=\tilde C g(t,2C_0,\ldots,2C_0)\notag\\
&+\frac{\tilde C}{2^{n+2}}g(t,2 e C_0\norm{u(t)}_{H^{p+1}},\ldots,2 e C_0\norm{u(t)}_{H^{p+1}}, 2 e C_0 \norm{u_t(t)}_{H^p})\notag\\
&:=\kappa (t).
\end{align}
Also
\begin{align} \label{psi}
&\tilde C \sum_{\b\in \naturals^{n+2}_0}\left(b_{\b}(t)C_0^{|\b|} 2^{|\b|-(n+2)}
\sum_{\a\not = \vec 0}
\prod_{k=0}^{n+1}(\gamma_k^{1-\a_k} \d_k^{\a_k})^{\frac{n+2}{|\a|}} \right) \notag\\
&\leq \tilde C\sum_{\a\not = \vec 0} \sum_{\b\in \naturals^{n+2}_0}\left(b_{\b}(t)C_0^{|\b|} 2^{|\b|-(n+2)}
\prod_{k=0}^{n+1}(\gamma_k^{1-\a_k} \d_k^{\a_k})^{\frac{n+2}{|\a|}} \right)  \notag\\
& \leq 2\tilde C g\Big(t,2 eC_0\left[1+\norm{u}_{H^{p+1}}+\norm{A^{p+1} e^{\t \mathscr A}u}\right]^{n+2},
\ldots, \notag\\
&2 eC_0\left[1+\norm{u}_{H^{p+1}}+\norm{A^{p+1} e^{\t \mathscr A}u}\right]^{n+2},
2 eC_0\left[1+\norm{u_t}_{H^{p}}+\norm{A^{p} e^{\t \mathscr A}u_t}\right]^{n+2}\Big)\notag\\
&:=\psi\left(t,\norm{u}_{H^{p+1}}, \norm{u_t}_{H^p},\norm{A^{p+1}e^{\t \mathscr A}u},\norm{A^p e^{\t \mathscr A}u_t}\right).
\end{align}

It follows from (\ref{e-27})-(\ref{psi}) that
\begin{align} \label{e-28}
&\tilde C g\left(t,C_0\norm{A^p e^{\t \mathscr A}u},C_0\norm{A^p e^{\t \mathscr A}u_{x_1}},
\cdots,C_0\norm{A^p e^{\t \mathscr A}u_{x_n}},C_0\norm{A^p e^{\t \mathscr A}u_{t}}\right) \notag\\
&\leq \kappa(t)+\t(t)\psi\left(t,\norm{u}_{H^{p+1}}, \norm{u_t}_{H^p},\norm{A^{p+1}e^{\t \mathscr A}u},\norm{A^p e^{\t \mathscr A}u_t}\right)\notag\\
&\hspace{1 in} \times \Big(\norm{A^p \mathscr A^{\frac{1}{2}} e^{\t \mathscr A}u_t}^2+\norm{A^{p+1} \mathscr A^{\frac{1}{2}} e^{\t \mathscr A}u}^2\Big).
\end{align}

If we set
\begin{align} \label{def-Y}
Y(t):=\left(\norm{A^{p} e^{\t(t) \mathscr A}u_t(t)}^2+\norm{A^{p+1}e^{\t(t) \mathscr A}u(t)}^2\right)^{\frac{1}{2}},
\end{align}
then by (\ref{eqn-1}), (\ref{e-22}) and (\ref{e-28}) we arrive at
\begin{align*}
Y(t) Y'(t)
\leq \kappa (t) Y(t)&+\left[\t'(t)+\t(t)Y(t) \psi\left(t,\norm{u}_{H^{p+1}}, \norm{u_t}_{H^p},Y(t),Y(t)\right)\right]\notag\\
&\times \Big(\norm{A^p \mathscr A^{\frac{1}{2}} e^{\t \mathscr A}u_t}^2+\norm{A^{p+1} \mathscr A^{\frac{1}{2}} e^{\t \mathscr A}u}^2\Big).
\end{align*}

Now we define
\begin{align} \label{def-eta}
\eta(t):=\psi\left(t,\norm{u}_{H^{p+1}}, \norm{u_t}_{H^p}, Y_0+\int_0^t \kappa(s) ds, Y_0+\int_0^t \kappa(s) ds\right)\left(Y_0+\int_0^t \kappa(s) ds\right),
\end{align}
where $\kappa$, $\psi$ are defined in (\ref{kappa}) and (\ref{psi}) respectively,
and $Y_0=(\norm{A^p e^{\t_0 \mathscr A}u_1}^2+\norm{A^{p+1}e^{\t_0 \mathscr A}u_0}^2 )^{\frac{1}{2}}$.
Thus, if $\t(t)$ solves the differential equation
\begin{align} \label{ODE-2}
\t'(t)+\t(t) \eta(t)=0 \text{\;\;with\;\;} \t(0)=\t_0>0,
\end{align}
then analog to the proof of Theorem \ref{thm-1}, we may conclude that $Y(t)$ is finite for all $t\in [0,T]$, i.e.,
$u(t)\in \mathcal D(A^{p+1}e^{\t(t) \mathscr A})$ and $u_t(t)\in \mathcal D(A^p e^{\t(t) \mathscr A})$ for all $t\in [0,T]$.
\end{proof}

\begin{remark}
The results in this section are also valid for general Gevrey-Sobolev classes
$\mathcal D(A^p e^{\t \mathscr A^{1/s}})$, $s\geq 1$, with its norm (\ref{G-norm}). Note, functions in $\mathcal D(A^p e^{\t \mathscr A^{1/s}})$, $s\geq 1$, have Gevrey class regularity of order $s$, in its first $m$ spatial variables, $m\leq n$. (One may recall the definition of the operator $\mathscr A$ in the Introduction.) In fact, we can follow the proof of the above results line by line to show if the initial data $(u_0,u_1)$ are in the spaces $\mathcal D(A^{p+1} e^{\sigma \mathscr A^{1/s}})$ and $\mathcal D(A^{p} e^{\sigma \mathscr A^{1/s}})$, respectively, where $p>n/2$, $s\geq 1$, then the solution $(u,u_t)$ belong to $\mathcal D(A^{p+1} e^{\t(t) \mathscr A^{1/s}})$ and $\mathcal D(A^{p} e^{\t(t) A^{1/s}})$, respectively, with $\t(t)$ specified in the above theorems. In short, our results are equally valid for any Gevrey class $s\geq 1$, provided the initial data are there.
\end{remark}

\bigskip
\section{$\ell^1$ estimate - the case of Wiener algebra} \label{sec-l-1}
In this section we employ a different Gevrey class of real analytic functions, which is based on the space of functions with summable Fourier series (Wiener algebra), to study the analytic regularity of solutions to nonlinear wave equations.
This type of Gevrey classes of real analytic functions was introduced in \cite{Oliver-Titi-01}.
In Section \ref{sec-compare} one will see, as it was also demonstrated in \cite{Oliver-Titi-01}, that such Gevrey class has its advantage of evaluating the radius of analyticity of solutions.

Let $u\in L^1({\mathbb{T}^n})$ with its Fourier series  $\sum_{j\in \ints^n}u_j e^{ij\cdot x}$. Then the $\ell^1$ norm of its Fourier transform is given by
$\norm{\hat u}_{\ell^1}:=\sum_{j\in \ints^n}|u_j|.$ This norm defines a Banach algebra, which is called \emph{Wiener algebra}, in the classic harmonic analysis.


The Gevrey norm based on the Wiener algebra is defined by
\begin{align} \label{Gevrey}
\norm{\hat u}_{G_{\t}(\ell^1)}:=\sum_{j\in \ints^n}e^{\t|j'|}|u_j|
\end{align}
where $j'$ stands for the first $m$ components of $j=(j_1,\ldots,j_n)\in \ints^n$, i.e., $j'=(j_1,\ldots,j_m)$ for some $m\leq n$. Furthermore if $u\in L^1(\mathbb T^n)$ such that $\norm{\hat u}_{G_{\t}(\ell^1)}$ is finite, we say that $\hat u$ belongs to $G_{\t}(\ell^1)$.

For clarity purposes we demonstrate the estimate for the wave equation (\ref{wave-eqn-1}) with the nonlinearity $f(t,x,u)$, and briefly discuss the general system (\ref{wave-eqn}) in Remark \ref{remark-2}.

The following theorem is concerned with Gevrey regularity of solutions with initial data in the Gevrey class $G_{\sigma}(\ell^1)$.
For the sake of comparing different estimates in the next section, we assume the existence of the same type of solutions as in Theorem \ref{thm-1}.

\begin{theorem} \label{thm-3}
Assume $f(t,x,u)$ satisfies Assumption \ref{ass-1}. Let $u_0\in H^{p}(\mathbb T^n)$, $u_1\in H^{p-1}(\mathbb T^n)$, $p>\frac{n}{2}$, and $\widehat {A u_0}$, $\widehat {u_1}\in G_{\sigma}(\ell^1)$.
Also suppose the initial value problem (\ref{wave-eqn-1})
has a unique solution $u\in C([0,T];H^p(\mathbb T^n))$ with $u_t\in C([0,T];H^{p-1}(\mathbb T^n))$, in the sense of Definition \ref{solution}. Then, $\widehat {A u(t)}$ and $\widehat {u_t(t)}$ both belong to the Gevrey class $G_{\t(t)}(\ell^1)$, for all $t\in [0,T]$, provided $\t(t)$ solves the differential equation
\begin{align*}
\t'(t)=-\t^2(t)\tilde h(t)  \text{\;\;with\;\;} \t(0)=\t_0:=\min\{\lambda,\sigma\},
\end{align*}
where $\tilde h(t)\geq 0$ for all $t\in [0,T]$, defined in (\ref{def-tilde-h}) below,
depends on $\norm{\widehat {u(t)}}_{\ell^1}$, $\norm{\widehat {A u_0}}_{G_{\t_0}(\ell^1)}$
and $\norm{\hat u_1}_{G_{\t_0}(\ell^1)}$.
\end{theorem}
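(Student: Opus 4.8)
The plan is to transcribe the rigorous Galerkin scheme of Theorem \ref{thm-1} into the Wiener-algebra setting, exploiting that the norm (\ref{Gevrey}) is a Banach algebra with constant $1$. Writing the Galerkin equation (\ref{Galerkin}) in Fourier variables, each coefficient obeys $\ddot u_{N,j}+\omega_j^2 u_{N,j}=-(\widehat{P_N f})_j$, where $\omega_j:=(1+|j|^2)^{1/2}$. The key step is to diagonalize by passing to the first-order variables $v_{N,j}:=\dot u_{N,j}+i\omega_j u_{N,j}$ and $w_{N,j}:=\dot u_{N,j}-i\omega_j u_{N,j}$, which satisfy $\dot v_{N,j}=i\omega_j v_{N,j}-(\widehat{P_N f})_j$ and $\dot w_{N,j}=-i\omega_j w_{N,j}-(\widehat{P_N f})_j$; hence $\frac{d}{dt}|v_{N,j}|\le |(\widehat{P_N f})_j|$ and likewise for $w_{N,j}$. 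Since $\omega_j|u_{N,j}|\le\frac12(|v_{N,j}|+|w_{N,j}|)$ and $|\dot u_{N,j}|\le\frac12(|v_{N,j}|+|w_{N,j}|)$, control of the linear energy-like quantity $Z_N(t):=\sum_{|j|\le N}e^{\t_N(t)|j'|}\big(|v_{N,j}|+|w_{N,j}|\big)$ controls $\norm{\widehat{A u_N}}_{G_{\t_N}(\ell^1)}$ and $\norm{\widehat{u_N'}}_{G_{\t_N}(\ell^1)}$ simultaneously. Differentiating gives
\begin{align*}
\frac{dZ_N}{dt}\le \t_N'(t)\,\mathcal{D}_N+2\,\norm{\widehat{P_N f}}_{G_{\t_N}(\ell^1)},\qquad \mathcal{D}_N:=\sum_{|j|\le N}|j'|\,e^{\t_N|j'|}\big(|v_{N,j}|+|w_{N,j}|\big),
\end{align*}
where the dissipation $\mathcal{D}_N$ carries exactly one power of $|j'|$, coming from $\frac{d}{dt}e^{\t|j'|}=\t'|j'|e^{\t|j'|}$.

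The heart of the argument is the nonlinear estimate. Using the unit-constant algebra property of $G_\t(\ell^1)$, the series structure of Assumption \ref{ass-1}, the constraint $\t\le\lambda$, and $1\le(1+|j|^2)^{p/2}$, one obtains the clean bound $\norm{\widehat{f(t,x,u_N)}}_{G_\t(\ell^1)}\le g\big(t,\norm{\hat u_N}_{G_\t(\ell^1)}\big)$, with \emph{no} algebra constant (this is the advantage over the $L^2$ theory). Next, from the elementary inequality $e^x\le e+x^2e^x$ $(x\ge0)$ applied with $x=\t|j'|$, together with $|j'|^2|u_{N,j}|\le |j'|\,\omega_j|u_{N,j}|$, one splits the Gevrey norm into a low-order piece plus a piece absorbed by the dissipation:
\begin{align*}
\norm{\hat u_N}_{G_\t(\ell^1)}\le e\,\norm{\hat u_N}_{\ell^1}+\t^2\sum_{|j|\le N}|j'|\,e^{\t|j'|}\,\omega_j|u_{N,j}|\le e\,\norm{\hat u_N}_{\ell^1}+\tfrac{\t^2}{2}\,\mathcal{D}_N.
\end{align*}
Inserting this into $g$ through the tangent-line (convexity) bound $b^{k}\le a^{k}+k\,b^{k-1}(b-a)$ for $s\mapsto s^k$, and summing against the coefficients of $g$ (which produces the convergent series $g':=\partial_s g$), separates the low-order term from the dissipation to \emph{first} power:
\begin{align*}
\norm{\widehat{f(t,x,u_N)}}_{G_\t(\ell^1)}\le g\big(t,e\,\norm{\hat u_N}_{\ell^1}\big)+\tfrac{\t^2}{2}\,g'\big(t,\norm{\hat u_N}_{G_\t(\ell^1)}\big)\,\mathcal{D}_N.
\end{align*}
The exponent $\t^2$ (rather than the $\t^3$ of Theorem \ref{thm-1}) is exactly the arithmetic of the \emph{linear} $\ell^1$ energy: of the surplus powers of $|j'|$ produced by the elementary inequality, one feeds the dissipation and one is traded against $\omega_j$ for the single $A$-derivative carried by $\widehat{A u_N}$.

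To close, I would follow Theorem \ref{thm-1} verbatim. Choosing $\t_N$ to solve the ODE that annihilates the bracket $\t_N'+\t_N^2\,g'\big(t,\norm{\hat u_N}_{G_{\t_N}(\ell^1)}\big)$, the energy inequality reduces to $\frac{dZ_N}{dt}\le 2\,g\big(t,e\,\norm{\hat u_N}_{\ell^1}\big)$, which integrates to $Z_N(t)\le \xi(t):=Z_N(0)+2\int_0^t g\big(s,e\,\norm{\hat u_N(s)}_{\ell^1}\big)\,ds$. Since $p>\frac n2$ yields $\norm{\hat u_N}_{\ell^1}\le C\norm{u_N}_{H^p}$ and $u_N\to u$ in $C([0,T];H^p)$ (established exactly as in Theorem \ref{thm-1}), for large $N$ one replaces $\norm{\hat u_N}_{\ell^1}$ by $\norm{\hat u}_{\ell^1}+1$ to get a uniform bound $Z_N\le\xi$. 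Defining $\t(t)$ by $\t'+\t^2\tilde h=0$ with $\tilde h(t):=g'\big(t,\tfrac12\xi(t)\big)$ (so that $\t(t)=\big(\t_0^{-1}+\int_0^t\tilde h\big)^{-1}$, the advertised algebraic rate), and using $\norm{\hat u_N}_{G_{\t_N}(\ell^1)}\le\tfrac12 Z_N\le\tfrac12\xi$, a comparison of the two ODEs gives $\t(t)\le\t_N(t)$; hence $\norm{\widehat{A u_N}}_{G_{\t(t)}(\ell^1)}+\norm{\widehat{u_N'}}_{G_{\t(t)}(\ell^1)}\le Z_N(t)\le\xi(t)$. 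Passing to the limit on finite Fourier sums, as in Theorem \ref{thm-1}, yields $\widehat{A u(t)},\widehat{u_t(t)}\in G_{\t(t)}(\ell^1)$, and one reads off that $\tilde h$ depends only on $\norm{\hat u}_{\ell^1}$, $\norm{\widehat{A u_0}}_{G_{\t_0}(\ell^1)}$ and $\norm{\hat u_1}_{G_{\t_0}(\ell^1)}$.

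I expect the nonlinear estimate to be the main obstacle: one must simultaneously (i) use the unit-constant algebra property to avoid the constants that inflate the $L^2$ bound, (ii) calibrate the elementary inequality $e^x\le e+x^2e^x$ so that precisely one surplus power of $|j'|$ survives to be traded for the single available $A$-derivative, and (iii) keep the dissipation \emph{linear}, matching the linear energy $Z_N$, via the convexity splitting, so that the balance law for the radius is $\t'=-\t^2\tilde h$. Everything else—the diagonalization, the Galerkin convergence, and the ODE comparison—is a faithful copy of the template already set up for Theorem \ref{thm-1}.
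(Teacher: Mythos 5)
Your proposal is correct and follows essentially the same route as the paper's proof: a Fourier-side $\ell^1$ energy with time-dependent weight $e^{\tau(t)|j'|}$, the unit-constant Wiener algebra (Young convolution) bound for the nonlinearity under the constraint $\tau\le\lambda$, the elementary inequality $e^x\le e+x^{2}e^x$ to split off a dissipation term carrying the factor $\tau^2$, the trade of one surplus power of $|j'|$ against the factor $(1+|j|^2)^{1/2}$ stored in the energy, and finally the choice of $\tau$ by an ODE that annihilates the dissipation bracket, followed by ODE comparison and passage to the limit in finite Fourier sums. Your two departures---using the diagonalized variables $\dot u_{j}\pm i\omega_j u_{j}$ in place of the paper's modal energy $\varphi_j=\left(|u_j'|^2+(1+|j|^2)|u_j|^2\right)^{1/2}$, and composing the split norm with $g$ via the convexity bound $b^k\le a^k+kb^{k-1}(b-a)$ (producing $\partial_s g$) instead of the paper's in-power inequality $e^x\le e+x^{2/k}e^x$ plus discrete H\"older (producing $g(t,2y+1)$)---are equivalent technical variants that only change the explicit formula for $\tilde h$, not the structure $\tau'=-\tau^2\tilde h$ or the stated dependencies of the result.
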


\begin{proof}
The following calculations are formal, which can be justified rigorously by using the Galerkin method. Let the solution
$u(t)=\sum_{j\in \ints^n} u_j(t) e^{ij\cdot x}$,
where $u_j(t)$ are Fourier coefficients.
By assumption, the nonlinearity $f$ is in the form $f(t,x,u)=\sum_{j\in \ints^n}\left(\sum_{k=0}^{\infty} a_{jk}(t) u^k \right) e^{ij\cdot x}$. Then since $u$ is the solution of the equation $u_{tt}-\Delta u+u+f(t,x,u)=0$, we obtain for all $j\in \ints^n$,
\begin{align*}
u_j''(t) +(1+|j|^2) u_j(t)+a_{j0}(t)
+\sum_{l\in \ints^n}\sum_{k=1}^{\infty} \left(a_{lk}(t) \sum_{m_1+\cdots+m_k=j-l}u_{m_1}(t)\cdots u_{m_k}(t)\right)=0
\end{align*}
where $m_1,\ldots,m_k \in \ints^n$.

Thus
\begin{align*}
\begin{cases}
u_j''\bar u_j'+(1+|j|^2) u_j\bar u_j'+\bar u_j'\Big(a_{j0}+\sum_{l\in \ints^n}\sum_{k=1}^{\infty} a_{lk}\sum_{m_1+\cdots+m_k=j-l}u_{m_1}\cdots u_{m_k}\Big)=0\\
\bar u_j'' u_j'+(1+|j|^2) \bar u_j u_j'+u_j'\Big(a_{j0}+\sum_{l\in \ints^n} \sum_{k=1}^{\infty} a_{lk}\sum_{m_1+\cdots+m_k=j-l}\bar u_{m_1}\cdots \bar u_{m_k}\Big)=0.
\end{cases}
\end{align*}
Adding these two identities yields
\begin{align} \label{f-2}
&\frac{d}{dt}\left(|u_j'|^2+(1+|j|^2)|u_j|^2\right) \notag\\
&\leq 2|u_j'|\left[|a_{j0}|+\sum_{l\in \ints^n}\sum_{k=1}^{\infty}
\left( |a_{lk}|\sum_{m_1+\cdots+m_k=j-l}|u_{m_1}|\cdots |u_{m_k}|\right)\right].
\end{align}

If we denote
\begin{align} \label{def-phi}
\varphi_j:=\left(|u_j'|^2+(1+|j|^2)|u_j|^2\right)^{\frac{1}{2}},
\end{align}
then (\ref{f-2}) implies
\begin{align*}
\varphi_j'\leq |a_{j0}|+\sum_{l\in \ints^n}\sum_{k=1}^{\infty} \left(|a_{lk}|\sum_{m_1+\cdots+m_k=j-l}|u_{m_1}|\cdots |u_{m_k}|\right).
\end{align*}
It follows that
\begin{align*}
&\frac{d}{dt}\left(e^{\t(t)|j'|}\varphi_j\right)
\leq \t'(t)|j'|e^{\t(t)|j'|}\varphi_j+e^{\t(t)|j'|}|a_{j0}|
\notag\\
&+e^{\t(t)|j'|}\sum_{l\in \ints^n}\sum_{k=1}^{\infty}
\left(|a_{lk}|\sum_{m_1+\cdots+m_k=j-l}|u_{m_1}|\cdots |u_{m_k}|\right).
\end{align*}
Now, a summation over all $j\in \ints^n$ gives
\begin{align} \label{f-6}
&\frac{d}{dt}\left(\sum_{j\in\ints^n}e^{\t|j'|}\varphi_j\right)
\leq \t'\left(\sum_{j\in \ints^n}|j'|e^{\t|j'|}\varphi_j\right)
+\sum_{j\in \ints^n} e^{\t(t)|j'|}|a_{j0}|
\notag\\
&+\sum_{j\in \ints^n} \left(e^{\t|j'|}\sum_{l\in \ints^n}\sum_{k=1}^{\infty} |a_{lk}|\sum_{m_1+\cdots+m_k=j-l}|u_{m_1}|\cdots |u_{m_k}|\right).
\end{align}
To evaluate the last term in (\ref{f-6}), we rearrange the order of summations, obtaining
\begin{align} \label{f-7}
&\sum_{j\in \ints^n} \left(e^{\t|j'|}\sum_{l\in \ints^n}\sum_{k=1}^{\infty} |a_{lk}|\sum_{m_1+\cdots+m_k=j-l}|u_{m_1}|\cdots |u_{m_k}|\right) \notag\\
&=\sum_{k=1}^{\infty}\sum_{l\in \ints^n}\left( |a_{lk}|\sum_{j\in \ints^n}e^{\t |j'|}
\sum_{m_1+\cdots+m_k=j-l}|u_{m_1}|\cdots |u_{m_k}| \right)\notag\\
&\leq \sum_{k=1}^{\infty}\sum_{l\in \ints^n}\left( |a_{lk}|e^{\t |l'|}\sum_{j\in \ints^n}
\sum_{m_1+\cdots+m_k=j-l}(e^{\t|m_1'|}|u_{m_1}|)\cdots (e^{\t|m_k'|}|u_{m_k}|) \right)\notag\\
&\leq \sum_{k=1}^{\infty}\sum_{l\in \ints^n}\left[ |a_{lk}|e^{\lambda |l'|}\left(\sum_{j\in \ints^n} e^{\t|j'|}|u_j|\right)^k \right]\,,
\end{align}
provided $\t(t)\leq \lambda$, for all $t\geq 0$, where we have used the Young's inequality for convolutions.

Combining (\ref{f-6}) and (\ref{f-7}) gives
\begin{align} \label{f-3}
\frac{d}{dt}\left(\sum_{j\in \ints^n} e^{\t|j'|}\varphi_j\right)
&\leq \t'\left(\sum_{j\in \ints^n}|j'|e^{\t|j'|}\varphi_j\right)+\sum_{j\in \ints^n} e^{\t(t)|j'|}|a_{j0}| \notag\\
&+\sum_{k=1}^{\infty}\sum_{l\in \ints^n}\left[ |a_{lk}|e^{\lambda |l'|}\left(\sum_{j\in \ints^n} e^{\t|j'|}|u_j|\right)^k \right].
\end{align}

Like the proof of Theorem \ref{thm-1} we apply the elementary inequality $e^x\leq e+x^{\ell} e^x$ for $x\geq 0$, $\ell \geq 0$, and it follows
\begin{align} \label{f-4}
&\left(\sum_{j\in \ints^n}e^{\t|j'|}|u_j|\right)^k \notag\\
&\leq \left(\sum_{j\in \ints^n}e|u_j|+\sum_{j\in \ints^n}\t^{\frac{2}{k}}|j'|^{\frac{2}{k}}e^{\t|j'|}|u_j|\right)^k \notag\\
&\leq 2^{k-1}e^k\left(\sum_{j\in \ints^n}|u_j|\right)^k
+2^{k-1}\t^2\left(\sum_{j\in \ints^n}|j'|^{\frac{2}{k}}e^{\t|j'|}|u_j|\right)^k \notag\\
&\leq 2^{k-1}e^k\left(\sum_{j\in \ints^n}|u_j|\right)^k+
2^{k-1}\t^2\left(\sum_{j\in \ints^n}e^{\t|j'|}|u_j|\right)^{k-1}
\left(\sum_{j\in \ints^n}|j'|^{2}e^{\t|j'|}|u_j|\right) \,,
\end{align}
where we have used the discrete H\"older's inequality.

A combination of (\ref{f-3}) and (\ref{f-4}) yields
\begin{align} \label{f-5}
&\frac{d}{dt}\left(\sum_{j\in \ints^n} e^{\t|j'|}\varphi_j\right) \notag\\
&\leq \frac{1}{2}\sum_{k=1}^{\infty}\sum_{l\in \ints^n}|a_{lk}|e^{\lambda|l'|} \left(2e\sum_{j\in \ints^n}|u_j|\right)^k +\sum_{j\in \ints^n} e^{\t(t)|j'|}|a_{j0}|\notag\\
&+\left[\t'+\t^2 \sum_{k=1}^{\infty}\sum_{l\in \ints^n} |a_{lk}|e^{\lambda|l'|}\left(2\sum_{j\in \ints^n}e^{\t|j'|}|u_j|\right)^{k-1}\right]
\left(\sum_{j\in \ints^n}|j'| e^{\t|j'|}\varphi_j\right).
\end{align}

If we define
\begin{align} \label{def-y-2}
y(t)=\sum_{j\in \ints^n} e^{\t(t)|j'|}\varphi_j(t),
\end{align}
the estimate (\ref{f-5}) is reduced to
\begin{align*}
y'(t) \leq g(t,2e\norm{\widehat {u(t)}}_{\ell^1})
+[\t'+\t^2g(t,2y(t)+1)]\left(\sum_{j\in \ints^n}|j'|e^{\t|j'|}\varphi_j\right).
\end{align*}
Therefore, similar to Theorem \ref{thm-1}, we set
\begin{align} \label{def-tilde-h}
\tilde h(t)=g\left(t,2y(0)+2\int_0^t g(s,2e\norm{\widehat {u(s)}}_{\ell^1})ds+1\right),
\end{align}
and let $\t(t)$ solve the differential equation
\begin{align*}
\t'(t)+\t^2(t)\tilde h(t)=0  \text{\;\;with\;\;} \t(0)=\t_0,
\end{align*}
then it can be shown that
\begin{align} \label{f-8}
y(t)\leq y(0)+\int_0^t g(s,2e\norm{\widehat{u(s)}}_{\ell^1})ds.
\end{align}
Note for $p>\frac{n}{2}$, $H^p$ is imbedded in the Wiener algebra, which consists  of all the functions whose Fourier transform is in  $\ell^1$. Since the solution $u\in C([0,T];H^{p})$ and $\widehat {Au_0}$, $\widehat {u_1}\in G_{\t_0}(\ell^1)$, it guarantees that the right-hand side of (\ref{f-8}) is finite for all $t\in [0,T]$.

By (\ref{def-phi}) and (\ref{def-y-2}), it follows
\begin{align} \label{f-9}
\sum_{j\in \ints^n} e^{\t|j'|}|u_j'|+\sum_{j\in \ints^n} e^{\t|j'|}\sqrt{1+|j|^2}|u_j|\leq 2y(t),
\end{align}
and since $Au=\sum_{j\in \ints^n}u_j \sqrt{1+|j|^2} e^{ij\cdot x}$,
we obtain from (\ref{f-8})-(\ref{f-9}) that
$\widehat {A u(t)}$ and $\widehat {u_t(t)}$ both belong to the Gevrey class $G_{\t(t)}(\ell^1)$ for all $t\in [0,T]$.
\end{proof}

\begin{remark} \label{remark-2}
For the wave equation (\ref{wave-eqn}), which features the general nonlinearity $f(t,x,u,\nabla u,u_t)$ satisfying Assumption \ref{ass-2}, we can still study the regularity of its solution by employing the Gevrey class $G_{\t}(\ell^1)$ and estimate the radius of analyticity of the solution. Similar to Theorem \ref{thm-3}, we conclude if $u_0\in H^{p+1}(\mathbb T^n)$, $u_1\in H^p(\mathbb T^n)$, $p>\frac{n}{2}$, and $\widehat {Au_0}$, $\widehat {u_1} \in G_{\sigma}(\ell^1)$, and the initial value problem (\ref{wave-eqn}) has a unique solution
$u\in C([0,T];H^{p+1}(\mathbb T^n))$ with $u_t\in C([0,T];H^{p}(\mathbb T^n))$,
then $\widehat {A u(t)}$ and $\widehat {u_t(t)}$ both belong to the Gevrey class $G_{\t(t)}(\ell^1)$ for all $t\in [0,T]$,
provided $\t(t)$ solves the differential equation
\begin{align*}
\t'(t)=-\t(t)\tilde \eta(t)  \text{\;\;with\;\;} \t(0)=\t_0=\min\{\lambda,\sigma\},
\end{align*}
where the function $\tilde \eta(t)>0$, for all $t\in [0,T]$, depending on $\norm{\widehat {Au_0}}_{G_{\t_0}(\ell^1)}$,
$\norm{\widehat {u_1}}_{G_{\t_0}(\ell^1)}$, $\norm{\widehat {Au(t)}}_{\ell^1}$ and $\norm{\widehat {u_t(t)}}_{\ell^1}$.
The proof of this result combines techniques from Theorems \ref{thm-2} and \ref{thm-3}, and we omit the details.
\end{remark}

\bigskip

\section{Comparison of the $L^2$ and $\ell^1$ estimates} \label{sec-compare}
In the previous sections we use two different Gevrey classes to investigate the analytic regularity of solutions to nonlinear wave equations. In order to compare these estimates we consider a nonlinear Klein-Gordon equation:
\begin{align} \label{Klein}
\begin{cases}
\Box u+u\pm u^k=0, \;\; k\geq 2; \\
u(0)=u_0, \;\; u_t(0)=u_1,
\end{cases}
\end{align}
with the periodic boundary condition on $u$. Under this scenario we can carry out more accurate calculations due to the relatively simple structure of the nonlinearity in (\ref{Klein}).
Our purpose is to give an evidence, as it was done in \cite{Oliver-Titi-01}, to demonstrate that the $\ell^1$ estimate (Wiener algebra approach) can be more precise than the $L^2$ estimate for evaluating the radius of analyticity of solutions.

First we employ the  Gevrey class $G_{\t}(\ell^1)$ to study the regularity of the solution to (\ref{Klein}). In fact we have the following result.
\begin{proposition} \label{prop-2}
Let $u_0\in H^p$, $u_1\in H^{p-1}$, $p>\frac{n}{2}$, and $\widehat {A u_0}$, $\widehat {u_1}\in G_{\sigma}(\ell^1)$. Suppose the initial value problem (\ref{Klein}) has a unique solution $u\in C([0,T];H^{p})$ with $u_t\in C([0,T];H^{p-1})$, in the sense of Definition \ref{solution}. Then, $\widehat {A u(t)}$ and $\widehat {u_t(t)}$ both belong to the Gevrey class $G_{\t(t)}(\ell^1)$, for all $t\in [0,T]$, provided $\t(t)$ solves the differential equation
\begin{align} \label{radius}
\t'(t)=-\t^{k+1}(t)h_1(t), \text{\;\;with\;\;} \t(0)=\sigma,
\end{align}
where $h_1(t)\geq 0$ for all $t\in [0,T]$, defined in (\ref{def-h-1}), depends on $\norm{\widehat {u(t)}}_{\ell^1}$, $\norm{\widehat {A u_0}}_{G_{\sigma}(\ell^1)}$
and $\norm{\widehat {u_1}}_{G_{\sigma}(\ell^1)}$.
\end{proposition}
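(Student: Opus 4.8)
The plan is to recognise Proposition \ref{prop-2} as the specialisation of Theorem \ref{thm-3} (equivalently Remark \ref{remark-2}) to the pure monomial nonlinearity $f(t,x,u)=\pm u^k$, and then to \emph{sharpen} the generic exponent-two estimate of Theorem \ref{thm-3} so as to exhibit the better, algebraically decaying lower bound that is the whole point of this comparison section. For $f=\pm u^k$ the only nonvanishing coefficient in Assumption \ref{ass-1} is $a_{0k}=\pm 1$, so the majorising function collapses to $g(t,s)=s^k$; since the nonlinearity is independent of $x$ the parameter $\lambda$ plays no role, which is why $\t(0)=\sigma$ rather than $\min\{\sigma,\lambda\}$. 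As in Theorem \ref{thm-3}, I would argue formally on the Fourier side (justifying everything afterwards by the Galerkin scheme exactly as in Theorem \ref{thm-1}). The equation reads $u_j''+(1+|j|^2)u_j\pm N_j=0$, with the $k$-fold convolution $N_j=\sum_{m_1+\cdots+m_k=j}u_{m_1}\cdots u_{m_k}$.

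First I would introduce $\varphi_j:=(|u_j'|^2+(1+|j|^2)|u_j|^2)^{1/2}$ as in (\ref{def-phi}) and obtain, by pairing with $\bar u_j'$ and adding the conjugate identity, the scalar inequality $\varphi_j'\le|N_j|\le\sum_{m_1+\cdots+m_k=j}|u_{m_1}|\cdots|u_{m_k}|$. Multiplying by the analyticity weight $e^{\t(t)|j'|}$, differentiating, and summing over $j\in\ints^n$ then gives, for $y(t):=\sum_j e^{\t|j'|}\varphi_j$,
\[
y'(t)\le \t'(t)\sum_{j\in\ints^n}|j'|e^{\t|j'|}\varphi_j+\sum_{j\in\ints^n} e^{\t|j'|}\sum_{m_1+\cdots+m_k=j}|u_{m_1}|\cdots|u_{m_k}|.
\]
Using $|j'|\le|m_1'|+\cdots+|m_k'|$, hence $e^{\t|j'|}\le\prod_i e^{\t|m_i'|}$, together with Young's inequality for convolutions (the weighted $\ell^1$ space being a Banach algebra), the nonlinear sum is dominated by $B^k$, where $B:=\sum_j e^{\t|j'|}|u_j|$ and $B\le y$ since $\varphi_j\ge|u_j|$.

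The decisive step is to split $B^k$ into a harmless part and an absorbable part. Here I would invoke the elementary inequality $e^x\le e+x^{\ell}e^x$ (as in (\ref{f-4})) with the exponent $\ell$ tuned to the degree $k$, and then a discrete H\"older inequality, aiming at an estimate of the shape
\[
B^k\le C_k\, e^k\Big(\sum_{j\in\ints^n}|u_j|\Big)^k+C_k\,\t^{k+1}(t)\,B^{k-1}\Big(\sum_{j\in\ints^n}|j'|e^{\t|j'|}\varphi_j\Big).
\]
The first term is $\le C_k(e\|\widehat u\|_{\ell^1})^k$, which is finite and a priori controlled because $u\in C([0,T];H^p)$ with $p>\frac n2$ embeds into the Wiener algebra; the second term carries the prefactor $\t^{k+1}$ in front of exactly the quantity that appears with $\t'$. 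Bounding $B\le 2y$ by the a priori bound in terms of $\|\widehat u\|_{\ell^1}$ and the initial Gevrey norms $\|\widehat{Au_0}\|_{G_\sigma(\ell^1)}$, $\|\widehat{u_1}\|_{G_\sigma(\ell^1)}$, I would define $h_1$ accordingly and impose $\t'+\t^{k+1}h_1=0$, so that the bracket multiplying the dangerous term is nonpositive. This yields (\ref{radius}) and leaves $y'(t)\le g(t,2e\|\widehat u\|_{\ell^1})$, which integrates to a finite bound for $y$ on $[0,T]$; passing to the limit in the Galerkin approximation and using $\sum_j e^{\t|j'|}|u_j'|+\sum_j e^{\t|j'|}\sqrt{1+|j|^2}|u_j|\le 2y(t)$ then gives $\widehat{Au(t)},\widehat{u_t(t)}\in G_{\t(t)}(\ell^1)$.

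The main obstacle I anticipate is precisely the sharp form of this splitting: extracting the prefactor $\t^{k+1}$---rather than the generic $\t^2$ of Theorem \ref{thm-3}---while keeping the dangerous term \emph{linear} in $\sum_j|j'|e^{\t|j'|}\varphi_j$, since this weight only dominates $\sum_j|j'|^2e^{\t|j'|}|u_j|$ and not higher powers of $|j'|$. Reconciling the higher power of $\t$ with this restricted control is exactly what dictates the choice of $\ell$ and of the H\"older exponents, and is the step I expect to require the most care. It is also the crux of the comparison: the resulting bound behaves like $\t(t)\sim t^{-1/k}$ when $\|\widehat u\|_{\ell^1}$ stays bounded, hence decays only algebraically and more slowly than the $L^2$ estimate, which is the improvement this section is meant to demonstrate.
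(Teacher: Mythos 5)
Your overall strategy is exactly the paper's: the Fourier-side energies $\varphi_j$ of (\ref{def-phi}), the weighted sum $y(t)=\sum_{j\in\ints^n}e^{\tau|j'|}\varphi_j$, Young's inequality for the $k$-fold convolution, the elementary inequality $e^x\le e+x^{\ell}e^x$ with $\ell=\frac{k+1}{k}$ (so that the prefactor becomes $\tau^{\ell k}=\tau^{k+1}$), a discrete H\"older inequality, and the choice $\tau'+\tau^{k+1}h_1=0$ to absorb the dangerous term. However, the specific estimate you aim for,
\begin{align*}
B^k\le C_k e^k\Big(\sum_{j\in\ints^n}|u_j|\Big)^k
+C_k\,\tau^{k+1}\,B^{k-1}\Big(\sum_{j\in\ints^n}|j'|e^{\tau|j'|}\varphi_j\Big),
\qquad B=\sum_{j\in\ints^n}e^{\tau|j'|}|u_j|,
\end{align*}
is not merely hard to prove; it is false. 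Test it on a single mode $u=a\,e^{ij\cdot x}$, $u_t=0$, with $|j'|=|j|=M$ and $\tau=s/M$: dividing through by $B^k=a^k e^{ks}$, the inequality would require $1\le C_k e^k e^{-ks}+2C_k s^{k+1}M^{1-k}$, and for fixed large $s$ and $M\to\infty$ (recall $k\ge2$) the right-hand side drops below $1$. The obstruction is the one you yourself flagged: the quantity $\sum_j|j'|e^{\tau|j'|}\varphi_j$ controls $\sum_j|j'|^2e^{\tau|j'|}|u_j|$ but nothing with a heavier weight, and a prefactor $B^{k-1}$ is too weak to compensate.

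The splitting that works --- the paper's (\ref{k-14}) --- keeps the prefactor one notch heavier: writing $|j'|^{\frac{k+1}{k}}=|j'|^{\frac{k-1}{k}}\cdot|j'|^{\frac{2}{k}}$ and applying H\"older with exponents $\frac{k}{k-1}$ and $k$ gives
\begin{align*}
\Big(\sum_{j\in\ints^n}|j'|^{\frac{k+1}{k}}e^{\tau|j'|}|u_j|\Big)^{k}
\le\Big(\sum_{j\in\ints^n}|j'|e^{\tau|j'|}|u_j|\Big)^{k-1}
\Big(\sum_{j\in\ints^n}|j'|^{2}e^{\tau|j'|}|u_j|\Big),
\end{align*}
after which $|j'||u_j|\le\varphi_j$ and $|j'|^2|u_j|\le|j'|\varphi_j$ bound the right-hand side by $y^{k-1}\sum_j|j'|e^{\tau|j'|}\varphi_j$. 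This repair costs nothing downstream: in your final step you bound the prefactor by the a priori bound on $y$ anyway, so replacing $B^{k-1}$ by $y^{k-1}$ leads to the same $h_1$ in (\ref{def-h-1}) and the same ODE (\ref{radius}); with this correction your argument coincides with the paper's. Two small corrections to your closing remarks: since $h_1(t)$ itself grows like $t^{k-1}$ when $\norm{\widehat{u}}_{\ell^1}$ stays bounded, (\ref{radius}) yields $\tau(t)\sim t^{-1}$, not $t^{-1/k}$ (consistent with the cubic example (\ref{cubic-wave})); and the advantage over the $L^2$ approach is not a different order of decay --- Proposition \ref{prop-3} produces the same ODE structure --- but the pointwise comparison $h_1(t)\le h_2(t)$ of (\ref{k-12}), which makes the $\ell^1$ lower bound on the radius of analyticity larger.
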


\begin{proof}
Following the estimate in the proof of Theorem \ref{thm-3}, we reach
\begin{align} \label{k-13}
\frac{d}{dt}\left(\sum_{j\in \ints^n} e^{\t(t) |j'|}\varphi_j(t)\right)
&\leq \t'\left(\sum_{j\in \ints^n}|j'|e^{\t(t) |j'|}\varphi_j(t)\right)+
\left(\sum_{j\in \ints^n} e^{\t(t) |j'|}|u_j(t)|\right)^k
\end{align}
where
$\varphi_j=\left[|u_j'|^2+(1+|j|^2)|u_j|^2\right]^{\frac{1}{2}}$.

Next, we evaluate the last term in (\ref{k-13}). The estimate will be slightly different from (\ref{f-4}) in the proof of Theorem \ref{thm-3}. Indeed, in the estimate (\ref{f-4}) we require the exponent of $\t$, on the right-hand side of the inequality, to remain the same for all $k\geq 1$, i.e., we demand the term $\t^2$ appears in the estimate, no matter what the value of $k$ is. This is important when we calculate the summation over all $k\geq 1$. But in the Klein-Gordon equation (\ref{Klein}) the nonlinearity is a monomial $u^k$, which provides us more freedom to perform the evaluation. By using the inequality $e^x\leq e+x^{\ell} e^x$ for all $x\geq 0$, $\ell \geq 0$, we deduce
\begin{align} \label{k-14}
&\left(\sum_{j\in \ints^n}e^{\t|j'|}|u_j|\right)^k \notag\\
&\leq \left(\sum_{j\in \ints^n}e|u_j|+\sum_{j\in \ints^n}\t^{\frac{k+1}{k}}|j'|^{\frac{k+1}{k}}e^{\t|j'|}|u_j|\right)^k \notag\\
&\leq 2^{k-1}e^k \left(\sum_{j\in \ints^n}|u_j|\right)^k
+2^{k-1}\t^{k+1}\left(\sum_{j\in \ints^n}|j'|^{\frac{k+1}{k}}e^{\t|j'|}|u_j|\right)^k \notag\\
&\leq 2^{k-1}e^k\left(\sum_{j\in \ints^n}|u_j|\right)^k+
2^{k-1}\t^{k+1} \left(\sum_{j\in \ints^n} |j'|e^{\t|j'|}|u_j|\right)^{k-1}
\left(\sum_{j\in \ints^n} |j'|^{2}e^{\t|j'|}|u_j|\right),
\end{align}
where we have used the discrete H\"older's inequality.

If we let
$y(t)=\sum_{j\in \ints^n} e^{\t(t)|j'|}\varphi_j$,
then its follows from (\ref{k-13}) and (\ref{k-14}) that
\begin{align*}
y'(t) \leq 2^{k-1} e^k \norm{\widehat {u(t)}}_{\ell^1}^k+
[\t'+2^{k-1} \t^{k+1} y^{k-1}(t)]\left(\sum_{j\in \ints^n}|j'|e^{\t|j'|}\varphi_j\right).
\end{align*}
Now, we define
\begin{align} \label{def-h-1}
h_1(t)&=2^{k-1} \left(y(0)+2^{k-1}  e^k\int_0^t \norm{\widehat {u(s)}}^k_{\ell^1} ds\right)^{k-1}\notag\\
&=\left(2y(0)+2^k e^k\int_0^t \norm{\widehat {u(s)}}^k_{\ell^1} ds\right)^{k-1}.
\end{align}
Therefore if $\t(t)$ is the solution of the differential equation
\begin{align*}
\t'(t)+\t^{k+1}(t) h_1(t)=0 \text{\;\;with\;\;} \t(0)=\sigma,
\end{align*}
then
$y(t)\leq y(0)+2^{k-1}  e^k\int_0^t \norm{\widehat {u(s)}}^k_{\ell^1} ds$ for all $t\in [0,T]$, i.e.,
$\widehat {A u(t)}$ and $\widehat {u_t(t)}$ both belong to the Gevrey class $G_{\t(t)}(\ell^1)$ for all $t\in [0,T]$.
\end{proof}

In order to do the comparison, we study the same problem (\ref{Klein}) by employing the Gevrey-Sobolev class
$\mathcal D(A^p e^{\t \mathscr A})$ defined in (\ref{G-norm'}).

\begin{proposition} \label{prop-3}
Let $u_0\in \mathcal D(A^{p+1}e^{\sigma \mathscr A})$ and $u_1\in \mathcal D(A^p e^{\sigma \mathscr A})$ for $p>\frac{n}{2}$ and
$\sigma>0$.
Assume the initial-value problem (\ref{Klein}) has a unique solution $u\in C([0,T];H^{p})$ with $u_t\in C([0,T];H^{p-1})$, in the sense of Definition \ref{solution}.
Then $u(t)\in \mathcal D(A^{p+1} e^{\t(t)\mathscr A})$ and $u_t(t)\in \mathcal D(A^p e^{\t(t) \mathscr A})$, for all $t\in [0,T]$,
provided $\t(t)$ satisfies the equation
\begin{align}  \label{radius'}
\t'(t)=-\t^{k+1}(t)h_2(t) \text{\;\;with\;\;} \t(0)=\sigma,
\end{align}
where
\begin{align} \label{def-h-2}
h_2(t)&=\left(C_0 Y_0+\frac{1}{2}C_0^{k} (e\sqrt{2})^k \int_0^t \norm{u(s)}^k_{H^p}ds\right)^{k-1},
\end{align}
where $Y_0=(\norm{A^p e^{\s \mathscr A}u_1}^2+\norm{A^{p+1}e^{\s \mathscr A}u_0}^2 )^{\frac{1}{2}}$.
\end{proposition}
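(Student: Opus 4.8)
The plan is to follow the Galerkin scheme and energy estimate of Theorem \ref{thm-1} essentially verbatim, and to replace its nonlinear bound by a sharper one that is available precisely because the nonlinearity is the single monomial $\pm u^k$; this parallels the improvement of Proposition \ref{prop-2} over Theorem \ref{thm-3}. Thus I would first apply $A^pe^{\t\mathscr A}$ to the Galerkin truncation of (\ref{Klein}), pair with $A^pe^{\t\mathscr A}u_N'$, and reach the analog of (\ref{e-6}), whose right-hand side is $\t'(t)$ times the good quantity $\norm{A^p\mathscr A^{1/2}e^{\t\mathscr A}u_N'}^2+\norm{A^{p+1}\mathscr A^{1/2}e^{\t\mathscr A}u_N}^2$ plus $\norm{A^pe^{\t\mathscr A}u_N^k}\,\norm{A^pe^{\t\mathscr A}u_N'}$. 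Since $\mathcal D(A^pe^{\t\mathscr A})$ is a Banach algebra (Lemma \ref{lem-2}), iterating the product bound gives $\norm{A^pe^{\t\mathscr A}u_N^k}\le C_0^{k-1}\norm{A^pe^{\t\mathscr A}u_N}^k$, so everything reduces to estimating $\norm{A^pe^{\t\mathscr A}u_N}^k$.

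Here is the key step and the point where I would diverge from Theorem \ref{thm-1}. In that theorem the power series in $k$ forced a $k$-independent exponent of $\t$ (namely $3$); facing a single monomial I can instead tune the exponent so that $\t$ appears to the power $k+1$. Writing $\norm{A^pe^{\t\mathscr A}u_N}^k=\big(\sum_j|u_{N,j}|^2(1+|j|^2)^pe^{2\t|j'|}\big)^{k/2}$ and inserting the elementary inequality $e^{2x}\le e^2+x^{2+2/k}e^{2x}$ at $x=\t|j'|$ (as in (\ref{e-8}), but with exponent $2+2/k$ in place of $6/k$) makes $\t$ enter to the power $(2+2/k)\tfrac{k}{2}=k+1$. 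Splitting off the low-order piece and interpolating the surviving weight $|j'|^{2+2/k}$ by the discrete Hölder inequality between the $|j'|^2$- and the $|j'|^3$-weighted sums, I obtain the analog of (\ref{e-9}),
\[
\norm{A^pe^{\t\mathscr A}u_N}^k\le 2^{\frac{k}{2}-1}\Big(e^k\norm{u_N}_{H^p}^k+\t^{k+1}\norm{A^{p+1}e^{\t\mathscr A}u_N}^{k-2}\norm{A^{p+1}\mathscr A^{\frac12}e^{\t\mathscr A}u_N}^2\Big),
\]
where $|j'|^2\le 1+|j|^2$ controls the $|j'|^2$-sum by $\norm{A^{p+1}e^{\t\mathscr A}u_N}^2$ and $\norm{A^p\mathscr A^{3/2}e^{\t\mathscr A}u_N}\le\norm{A^{p+1}\mathscr A^{1/2}e^{\t\mathscr A}u_N}$ controls the $|j'|^3$-sum.

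The remainder is bookkeeping identical in spirit to Theorem \ref{thm-1}. Writing $Y_N=(\norm{A^pe^{\t\mathscr A}u_N'}^2+\norm{A^{p+1}e^{\t\mathscr A}u_N}^2)^{1/2}$ and inserting the key estimate into the energy inequality, the low-order part contributes $\tfrac12 C_0^{k-1}(e\sqrt2)^k\norm{u_N}_{H^p}^k$ to $Y_N'$, while the high-order part, after bounding $\norm{A^{p+1}\mathscr A^{1/2}e^{\t\mathscr A}u_N}^2$ by the good quantity, combines with $\t'(t)$ into a bracket that I annihilate by letting $\t_N$ solve $\t_N'+C_0^{k-1}2^{\frac{k}{2}-1}\t_N^{k+1}\norm{A^{p+1}e^{\t_N\mathscr A}u_N}^{k-2}\norm{A^pe^{\t_N\mathscr A}u_N'}=0$. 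Integrating the surviving inequality and passing $u_N\to u$ in $H^p$ yields $Y_N(t)\le\xi(t):=Y_0+\tfrac12 C_0^{k-1}(e\sqrt2)^k\int_0^t\norm{u(s)}_{H^p}^k\,ds$, which is finite because $u\in C([0,T];H^p)$; substituting $Y_N\le\xi$ back into the ODE for $\t_N$ produces the comparison equation (\ref{radius'}) with $h_2$ of the form (\ref{def-h-2}) and the monotonicity $\t(t)\le\t_N(t)$. Passing to the limit on finite Fourier sums, exactly as at the end of Theorem \ref{thm-1}, then gives $\norm{A^{p+1}e^{\t(t)\mathscr A}u(t)}\le\xi(t)$ and $\norm{A^pe^{\t(t)\mathscr A}u_t(t)}\le\xi(t)$, which is the assertion.

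I expect the only real difficulty to be the key estimate above: the exponent $2+2/k$ and the Hölder split must be chosen together so that the power of $\t$ comes out as $k+1$ while the heaviest surviving weight is exactly $|j'|^3$, the largest one the good quantity $\norm{A^{p+1}\mathscr A^{1/2}e^{\t\mathscr A}u_N}^2$ can absorb (because $|j'|^2\le 1+|j|^2$). Everything else transcribes from Theorem \ref{thm-1}; note also that $\t(0)=\sigma$ rather than $\min\{\sigma,\lambda\}$, since the monomial nonlinearity carries no $x$-dependence and the constraint $\t\le\lambda$ never arises.
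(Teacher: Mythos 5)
Your proposal is correct and follows exactly the route the paper intends: the paper's own ``proof'' of Proposition \ref{prop-3} is the one-line instruction to follow the estimates of Theorem \ref{thm-1} while adopting the monomial-tailored calculation of Proposition \ref{prop-2}, which is precisely what you carry out (your inequality $e^{2x}\le e^2+x^{2+2/k}e^{2x}$ and the H\"older interpolation of the weight $|j'|^{2+2/k}$ between the $|j'|^2$- and $|j'|^3$-weighted sums are the correct $L^2$ analogues of the weight $\frac{k+1}{k}$ used in the $\ell^1$ argument, and they produce the desired power $\t^{k+1}$). One bookkeeping caveat: your splitting constant $2^{\frac{k}{2}-1}$ leads to the comparison ODE coefficient $2^{\frac{k}{2}-1}C_0^{k-1}\xi(t)^{k-1}$, where $\xi(t)=Y_0+\tfrac12 C_0^{k-1}(e\sqrt2)^k\int_0^t\norm{u(s)}_{H^p}^k\,ds$, which exceeds the stated $h_2=(C_0\,\xi(t))^{k-1}$ of (\ref{def-h-2}) by the factor $2^{\frac{k}{2}-1}$ when $k>2$ (they coincide at $k=2$); since the paper suppresses all details this constant cannot be checked against anything, and the discrepancy affects neither the structure of the result nor the asymptotic decay rate of $\t(t)$.
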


\begin{proof}
Follow the estimate in the proof of Theorem \ref{thm-1} and adopt ideas from the calculations in Proposition \ref{prop-2}. We omit the details of the proof.
\end{proof}

Here, we provide an example of applications of the above proposition. Consider the Klein-Gordon equation with cubic nonlinearity:
\begin{align} \label{cubic-wave}
\begin{cases}
&\Box u+u+u^3=0, \;\;\;(t,x_1,x_2,x_3)\in \reals \times \reals^3, \\
&u(0)=u_0,\;\;u_t(0)=u_1.
\end{cases}
\end{align}
Notice, the energy of this equation is bounded and all solutions exist globally. In particular, the strong solution $(u,u_t)\in H^2 \times H^1$ exists globally with $u\in L^{\infty}(\reals^+,H^2)$ \cite{Brenner-79, Brenner-89}. Therefore, by Proposition \ref{prop-3} (also valid for the equation defined in the whole space $\reals^n$), we conclude, if the initial data $u_0$ and $u_1$ are both real analytic in the spatial variable $x_1$ (for instance), then the solution $(u,u_t)$ remains real analytic in $x_1$ for all time, with the radius of analyticity bounded below by $\t(t)$ of the asymptotic decay rate
\begin{align*}
\t(t) \thicksim \frac{1}{t}.
\end{align*}

\begin{remark}
Notice that the $L^2$ approach (Proposition \ref{prop-3})
requires $(u_0,u_1)\in H^{p+1}\times H^p$, $p>n/2$;
while the Wiener algebra approach (Proposition \ref{prop-2}) asks for less smoothness of the initial data: $(u_0,u_1)\in H^{p}\times H^{p-1}$, $p>n/2$.

Also, we find the equations (\ref{radius}) and (\ref{radius'}) are almost identical except the functions $h_1$ and $h_2$. Thus in order to compare the lower bounds of the radius of analyticity $\t(t)$ given by Propositions \ref{prop-2} and \ref{prop-3}, we shall compare the values of $h_1(t)$ and $h_2(t)$ for $t\in [0,T]$.

Indeed, we consider
\begin{align} \label{k-8}
\norm{\hat u}^2_{\ell^1}=\left(\sum_{j\in \ints^n}|u_j|\right)^2&\leq
\left(\sum_{j\in \ints^n}(1+|j|^2)^{-p}\right)\left(\sum_{j}(1+|j|^2)^{p} |u_j|^2\right)\notag\\
&=\left(\sum_{j\in \ints^n}(1+|j|^2)^{-p}\right)\norm{u}_{H^p}^2.
\end{align}

From Lemma \ref{lem-2} below we know that $C_0^2=2^{2p+1}\sum_{j\in \ints^n}(1+|j|^2)^{-p}$, i.e.,
\begin{align} \label{k-9}
\sum_{j\in \ints^n}(1+|j|^2)^{-p}=\frac{C_0^2}{2^{2p+1}}\leq \frac{C_0^2}{4},
\end{align}
for $p> n/2$.
It follows from (\ref{k-8}) and (\ref{k-9}) that
\begin{align} \label{k-10}
\int_0^t \norm{\widehat {u(s)}}^k_{\ell^1}ds
&\leq \left(\sum_{j\in \ints^n}(1+|j|^2)^{-p}\right)^{\frac{k}{2}}
\int_0^t \norm{u(s)}_{H^p}^k ds\notag\\
&\leq C_0^k 2^{-k} \int_0^t \norm{u(s)}_{H^p}^k ds.
\end{align}
Finally, we notice that
\begin{align} \label{k-11}
y(0)&=\sum_{j\in \ints^n} e^{\s |j'|}\varphi_j(0) \notag\\
&\leq \left(\sum_{j\in \ints^n}(1+|j|^2)^{-p}\right)^{\frac{1}{2}}
\left(\sum_{j\in \ints^n}e^{2\s|j'|}(1+|j|^2)^{p} \varphi_j^2(0)\right)^{\frac{1}{2}}\notag\\
&\leq \frac{C_0}{2}\left(\norm{A^p e^{\s \mathscr A}u_1}^2+\norm{A^{p+1} e^{\s \mathscr A}u_0}^2\right)^{\frac{1}{2}}
=\frac{C_0}{2} Y_0.
\end{align}

By substituting (\ref{k-10}) and (\ref{k-11}) into (\ref{def-h-1}) we obtain
\begin{align} \label{k-12}
h_1(t)\leq \left(C_0 Y_0+e^k C_0^k \int_0^t \norm{u(s)}_{H^p}^k ds\right)^{k-1}\leq h_2(t)
\end{align}
for all $t\in [0,T]$, if $k\geq 2$. It follows that the $\ell^1$ estimate provides larger radius of analyticity than the $L^2$ estimate does. Indeed, the reason of this fact is simply the imbedding
\begin{align}  \label{k-16}
\norm{\hat u}_{\ell^1}\leq C(p)\norm{u}_{H^p} \text{\;\;if\;\;} p>\frac{n}{2}.
\end{align}
As pointed out in \cite{Oliver-Titi-01}, the inequality (\ref{k-16}) becomes increasingly unsaturated - it has a large gap between its left and right hand sides - when $u$ is dominated by contributions from high wavenumbers. For example, if we set $u(x)=e^{imx}$, then the right hand side of (\ref{k-16}) increases with $m$, while the left hand side remains constant.
\end{remark}

\begin{remark} \label{remark-3}
Another way to see the advantage of $\ell^1$ estimate is to study the scaling behavior of the radius of analyticity with respect to physical parameters. In fact, for the equation
\begin{align} \label{k-19}
u_{tt}-\nu u_{xx}+\l u-u^3=0,
\end{align}
\cite{Liu-01} gives an explicit real analytic periodic solution
\begin{align} \label{k-18}
u=\sqrt{\frac{2m^2 \l}{1+m^2}}\sn\left(\sqrt{\frac{\l}{(1+m^2)(c^2-\nu )}}(x-ct)\right)
\end{align}
where $\sn$ is a Jacobi elliptic function with the modulus $m$, and $c^2>\nu>0$.
Notice that, the ODE which describes the steady states of (\ref{k-19}) was considered in \cite{Oliver-Titi-01} and it was shown that the estimates on the radius of analyticity obtained by the usual Gevrey class approach do not scale optimally as a function of the physical parameters, and in order to remedy it, the authors gave a modified definition of the Gevrey class based on the Wiener algebra, which was shown to yield a sharp scaling behavior of the estimates on the radius of analyticity. Their discovery can be verified here as well, for the Klein-Gordon (\ref{k-19}). For instance,
 it is easy to see from the explicit solution (\ref{k-18}) that, as $\nu \rightarrow 0$ the radius $\rho$ of analyticity of $u$ has the same asymptotic behavior with $C \nu^{\frac{1}{2}}$, that is, $\rho \sim C \nu^{\frac{1}{2}}$, where $C$ is a constant. By carrying out similar evaluations as in Propositions \ref{prop-2} and \ref{prop-3} we find that $\t(t)$, a lower bound of the radius of analyticity of the solution to (\ref{k-19}), obtained by the Gevrey estimate based on Wiener algebra, scales optimally as $\nu\rightarrow 0$; while the usual $L^2$ Gevrey estimate shows $\t \sim C \nu^{\frac{1+p}{2}}$, which is lack of sharpness since $p>\frac{1}{2}$.
But we omit the detail of calculations.
\end{remark}

\begin{remark} \label{remark-4}
Finally, we comment that, by using Fourier transforms instead of Fourier series, our results in the paper are also valid for nonlinear wave equations in the whole space $\reals^n$ or on the sphere $\mathbb{S}^{n-1}$. One refers to \cite{Oliver-Titi-00} for Gevrey estimates of Navier-Stokes equations in $\reals^3$, and to \cite{Cao-Rammaha-Titi1, Cao-Rammaha-Titi2} on $\mathbb{S}^{2}$, using spherical harmonics as a basis instead of the trigonometric functions in the periodic. Furthermore, the tools and results presented here can be extended in a straightforward manner to other  equations, such as the nonlinear Schr\"odinger equation with real analytic nonlinearity.

Recently, adopting the Gevrey class energy-like method, the authors of \cite{Guo-Titi-13} considered the analytic regularity of a non-dispersive Hamiltonian equation: the cubic Szeg\H o equation. By taking advantage of the uniform boundedness of the $\ell^1$ norm of the Fourier transform of the solution, the method based on the Wiener algebra provided a substantially better estimate (exponential decay) of the analyticity radius of the solution than the one (double exponential decay) obtained by the regular $L^2$ approach. Furthermore, the idea of working in the Wiener algebra to study spatial analyticity of solutions was recently applied to the incompressible Navier-Stokes system on $n$-torus \cite{Biswas-Jolly-Martinez-Titi}, where semigroup techniques were used.

Another  important example  is the following Cauchy problem for wave equations with exponential nonlinearities in the two-dimensional space:
\begin{align} \label{k-17}
\Box u+ u e^{u^2}=0 \text{\;\;on\;\;} \reals \times \reals^2.
\end{align}
Ibrahim et al.   \cite{Ibrahim-06} established    the global well-posedness (in time) of (\ref{k-17}), for  ($C^{\infty}(\reals^2)$) initial data of restricted size. Later,  Struwe \cite{Struwe-11, Struwe-11'} improved this result and  removed the restriction on the size of initial data. Since the nonlinearity $u e^{u^2}$ is analytic in $u$, the results  and tools presented in this paper are  applicable to (\ref{k-17}) with real analytic initial data in $\mathbb{R}^2$. Thus, by combining  our results  with the   global regularity result of Struwe \cite{Struwe-11, Struwe-11'}, one concludes that the solution of (\ref{k-17}) remains analytic for all $t\ge 0$, provided the initial data is real analytic in $\mathbb{R}^2$.

\end{remark}

\bigskip

\section{Appendix}
In the Appendix we prove some properties of the Gevrey classes used in the paper.
For more on Gevrey classes, see \cite{Ferrari-Titi-98, Foias-Temam-89, Levermore-Oliver-97, Oliver-Titi-00, Oliver-Titi-01}.
First we show the Gevrey classes $G_{\t}(\ell^1)$ and $\mathcal D(A^p e^{\t \mathscr A})$ correspond to functions which are analytic in certain arguments.
\begin{lemma} \label{lem-1}
Let $u(x)=\sum_{j\in \ints^n}u_j e^{ij\cdot x}$, where $x=(x_1, \ldots, x_n)\in \mathbb T^n$, such that
$\sum_{j\in \ints^n}|u_j|e^{\t|j'|}<\infty$ for all $\t\in (0,\sigma)$, where $j'=(j_1,\ldots,j_m)$,
$m\leq n$. Then $u$ is real analytic in the variable $(x_1,\ldots,x_m)\in \mathbb T^m$ with uniform radius of analyticity $\sigma$. That is, the function
$u(z_1,\ldots,z_m,x_{m+1},\ldots,x_n)=\sum_{j\in \ints^n}u_j
e^{i(\sum_{k=1}^m j_k z_k)} e^{i(\sum_{k=m+1}^n j_k x_k)}$
is analytic in the variables $z_1,\ldots,z_m$, where
$z_k=x_k+iy_k$, in the domain $x\in \mathbb T^n$, $\sum_{k=1}^m |y_k|^2<\sigma^2$.
\end{lemma}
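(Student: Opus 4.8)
The plan is to complexify the first $m$ variables and show that the resulting series converges absolutely and uniformly on compact subsets of the domain
$$D:=\Bigl\{(z_1,\ldots,z_m,x_{m+1},\ldots,x_n):x\in\mathbb T^n,\ \textstyle\sum_{k=1}^m y_k^2<\s^2\Bigr\},\qquad z_k=x_k+iy_k,$$
so that the limit is holomorphic by the Weierstrass convergence theorem.

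First I would observe that each summand $u_j\, e^{i\sum_{k=1}^m j_k z_k}\, e^{i\sum_{k=m+1}^n j_k x_k}$ is an entire function of $(z_1,\ldots,z_m)\in\complex^m$, so every partial sum $\sum_{|j|\le N}$ is holomorphic on $\complex^m$. Writing $y'=(y_1,\ldots,y_m)$, the modulus of the summand is $|u_j|\,e^{-\sum_{k=1}^m j_k y_k}$, and the Cauchy--Schwarz inequality gives $\bigl|\sum_{k=1}^m j_k y_k\bigr|\le |j'|\,|y'|$, where $|y'|=(\sum_{k=1}^m y_k^2)^{1/2}$ and $|j'|$ is the Euclidean norm of $j'$.

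Next, on any compact $K\subset D$ there is $r<\s$ with $|y'|\le r$ throughout $K$; choosing $\t\in(r,\s)$ one has $e^{-\sum_{k=1}^m j_k y_k}\le e^{r|j'|}\le e^{\t|j'|}$, so
$$\sum_{j\in\ints^n}\Bigl|u_j\, e^{i\sum_{k=1}^m j_k z_k}\, e^{i\sum_{k=m+1}^n j_k x_k}\Bigr|\le\sum_{j\in\ints^n}|u_j|\,e^{\t|j'|}<\infty$$
by hypothesis, the bound being independent of the point of $K$. Hence the partial sums converge uniformly on $K$, and by the Weierstrass theorem for holomorphic functions of several complex variables the limit $u$ is holomorphic on $D$. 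Restricting to real arguments shows $u$ is real analytic in $(x_1,\ldots,x_m)$, and since holomorphy persists on the whole ball $\{|y'|<\s\}$ uniformly in $(x_{m+1},\ldots,x_n)$, the uniform radius of analyticity is at least $\s$.

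The convergence estimate itself is routine, so the points requiring care are elsewhere. One is the geometry produced by the Cauchy--Schwarz step: the natural domain of holomorphy is the ball $\sum_{k=1}^m y_k^2<\s^2$ rather than a polydisc, and it is precisely this ball that underlies the notion of a single uniform radius $\s$. The other is the invocation of the several-variable Weierstrass convergence theorem to transfer holomorphy from the partial sums to their limit; should one prefer to avoid the several-variable statement, the same conclusion follows from the one-variable Weierstrass theorem applied in each $z_k$ separately, together with Hartogs' theorem to upgrade separate holomorphy to joint holomorphy.
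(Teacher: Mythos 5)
Your proof is correct and takes essentially the same approach as the paper's: both complexify the first $m$ variables, observe that each summand is entire, bound $\bigl|e^{i\sum_{k=1}^m j_k z_k}\bigr|\le e^{|y'|\,|j'|}\le e^{\tau|j'|}$ for $|y'|\le\tau<\sigma$, and conclude analyticity from uniform convergence of the series. Your write-up merely makes explicit the Cauchy--Schwarz step and the Weierstrass convergence theorem that the paper's proof leaves implicit.
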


\begin{proof}
Notice that the function $e^{i(\sum_{k=1}^m j_k z_k)} e^{i(\sum_{k=m+1}^n j_k x_k)}$ is entire in the variables $z_1,\ldots,z_m$. Thus we need to show that, the series
$\sum_{j\in \ints^n}u_j e^{i(\sum_{k=1}^m j_k z_k)} e^{i(\sum_{k=m+1}^n j_k x_k)}$ is convergent uniformly for all $x\in \mathbb T^n$, $\sum_{k=1}^m |y_k|^2\leq \t^2<\sigma^2$.
In fact,
\begin{align*}
\sum_{j\in \ints^n}|u_j| |e^{i(\sum_{k=1}^m j_k z_k)}| |e^{i(\sum_{k=m+1}^n j_k x_k)}|
\leq  \sum_{j\in \ints^n}|u_j| e^{|y||j'|}\leq \sum_{j\in \ints^n}|u_j| e^{\t|j'|}<\infty.
\end{align*}
\end{proof}

\begin{corollary} \label{cor-1}
Let $u\in \mathcal D(A^p e^{\t \mathscr A})$ for all $\t\in (0,\sigma)$.
If $p>n/2$, then $u$ is real analytic in the variables $x'=(x_1,\ldots,x_m)\in \mathbb T^m$
with uniform radius of analyticity $\sigma$.
\end{corollary}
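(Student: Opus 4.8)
The plan is to reduce Corollary \ref{cor-1} directly to Lemma \ref{lem-1}, whose hypothesis is phrased in terms of the $\ell^1$-type summability $\sum_{j}|u_j|e^{\t|j'|}<\infty$. Thus the only thing to establish is that membership in the $L^2$-based class $\mathcal D(A^p e^{\t \mathscr A})$, for $p>\frac n2$, forces this summability for each $\t<\sigma$; once that is done, Lemma \ref{lem-1} delivers the real analyticity in $(x_1,\ldots,x_m)$ together with the uniform lower bound $\sigma$ on the radius.

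First I would fix an arbitrary $\t\in(0,\sigma)$; by hypothesis $u\in\mathcal D(A^p e^{\t \mathscr A})$, so $\norm{A^p e^{\t \mathscr A}u}<\infty$. Then I would split each coefficient as
$$|u_j|e^{\t|j'|}=\Big[|u_j|(1+|j|^2)^{\frac p2}e^{\t|j'|}\Big](1+|j|^2)^{-\frac p2},$$
and apply the Cauchy--Schwarz inequality to the sum over $j\in\ints^n$:
$$\sum_{j\in\ints^n}|u_j|e^{\t|j'|}\leq\left(\sum_{j\in\ints^n}|u_j|^2(1+|j|^2)^p e^{2\t|j'|}\right)^{\frac12}\left(\sum_{j\in\ints^n}(1+|j|^2)^{-p}\right)^{\frac12}.$$
The first factor is exactly $\norm{A^p e^{\t \mathscr A}u}$, which is finite, while the second factor converges precisely because $p>\frac n2$: comparing $\sum_{j\in\ints^n}(1+|j|^2)^{-p}$ with the integral $\int_{\reals^n}(1+|\xi|^2)^{-p}\,d\xi$ shows convergence whenever $2p>n$. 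Hence $\sum_{j\in\ints^n}|u_j|e^{\t|j'|}<\infty$.

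Since $\t\in(0,\sigma)$ was arbitrary, this summability holds for every $\t\in(0,\sigma)$, which is exactly the hypothesis required by Lemma \ref{lem-1}. Invoking that lemma then yields that $u$ is real analytic in $(x_1,\ldots,x_m)\in\mathbb T^m$ with uniform radius of analyticity $\sigma$, completing the argument.

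I do not anticipate a genuine obstacle here; the entire content is the weighted embedding of $\mathcal D(A^p e^{\t \mathscr A})$ into the Wiener-type class of Lemma \ref{lem-1}, and the only point requiring care is ensuring that the radius obtained is \emph{uniform} in the sense demanded by the statement. This is guaranteed by running the Cauchy--Schwarz estimate for each fixed $\t<\sigma$ separately, rather than attempting a single bound at the endpoint $\t=\sigma$, where the Gevrey norm need not be finite.
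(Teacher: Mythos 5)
Your proof is correct and is essentially identical to the paper's own argument: the paper likewise applies the Cauchy--Schwarz (H\"older) inequality to bound $\sum_{j}e^{\t|j'|}|u_j|$ by $\big(\sum_{j}(1+|j|^2)^{-p}\big)^{1/2}\norm{A^p e^{\t \mathscr A}u}$ for each fixed $\t\in(0,\sigma)$, and then invokes Lemma \ref{lem-1}. No gaps; the observation that the estimate is run separately for each $\t<\sigma$ (never at the endpoint) matches the paper's treatment.
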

\begin{proof}
By H\"older's inequality, we have
\begin{align*}
\sum_{j\in \ints^n} e^{\t|j'|}|u_j| \leq \left(\sum_{j\in \ints^n}\frac{1}{(1+|j|^2)^p}\right)^{\frac{1}{2}}\left(\sum_{j\in \ints^n} (1+|j|^2)^p e^{2\t|j'|}|u_j|^2\right)^{\frac{1}{2}}<\infty
\end{align*}
if $p>\frac{n}{2}$.
\end{proof}

\smallskip
The next result states the Gevrey-Sobolev class $\mathcal D(A^p e^{\t \mathscr A})$ is an algebra.
\begin{lemma} \label{lem-2}
If $u$ and $v$ are in the Gevrey-Sobolev class $\mathcal D(A^p e^{\t \mathscr A})$ with $p>n/2$, then their product $uv\in \mathcal D(A^p e^{\t \mathscr A})$ and
\begin{align*}
\norm{A^p e^{\t \mathscr A} (uv)}\leq C_0 \norm{A^p e^{\t \mathscr A} u} \norm{A^p e^{\t \mathscr A}v}
\end{align*}
where $C_0=2^p\sqrt{2\sum_{j\in \ints^n}(1+|j|^2)^{-p}}$.
\end{lemma}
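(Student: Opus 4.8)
The plan is to carry out the entire estimate on the Fourier side. Writing $u=\sum_{m\in\ints^n}u_m e^{im\cdot x}$ and $v=\sum_{m\in\ints^n}v_m e^{im\cdot x}$, the product has Fourier coefficients given by the discrete convolution $(uv)_j=\sum_{m\in\ints^n}u_m v_{j-m}$, so that
\[
\norm{A^p e^{\t\mathscr A}(uv)}^2 = \sum_{j\in\ints^n}(1+|j|^2)^p e^{2\t|j'|}\Big|\sum_{m\in\ints^n}u_m v_{j-m}\Big|^2.
\]
Thus the task reduces to bounding the weighted $\ell^2$ norm of the sequence $(1+|j|^2)^{p/2}e^{\t|j'|}(uv)_j$ by the product of the two Gevrey--Sobolev norms, after first estimating $|(uv)_j|\le\sum_m|u_m||v_{j-m}|$.

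First I would record the two elementary factorization inequalities for the weights. Because $j'=m'+(j-m)'$, the triangle inequality gives $|j'|\le|m'|+|(j-m)'|$, hence $e^{\t|j'|}\le e^{\t|m'|}e^{\t|(j-m)'|}$ for $\t\ge0$: the exponential Gevrey weight factorizes cleanly across the convolution (this is exactly the mechanism highlighted in the Introduction). The Sobolev weight does \emph{not} factorize, but from $|j|\le|m|+|j-m|\le 2\max(|m|,|j-m|)$ one obtains $1+|j|^2\le 4\max(1+|m|^2,1+|j-m|^2)$, and therefore $(1+|j|^2)^{p/2}\le 2^p\big[(1+|m|^2)^{p/2}+(1+|j-m|^2)^{p/2}\big]$. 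Inserting both bounds and setting $a_m=(1+|m|^2)^{p/2}e^{\t|m'|}|u_m|$, $\alpha_m=e^{\t|m'|}|u_m|$, and analogously $b_m,\beta_m$ for $v$, the quantity $(1+|j|^2)^{p/2}e^{\t|j'|}|(uv)_j|$ is bounded by $2^p\big[(a*\beta)_j+(\alpha*b)_j\big]$, where $*$ denotes discrete convolution and $\norm{A^p e^{\t\mathscr A}u}=\norm{a}_{\ell^2}$, $\norm{A^p e^{\t\mathscr A}v}=\norm{b}_{\ell^2}$.

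Next I would apply Young's inequality for convolutions, $\norm{a*\beta}_{\ell^2}\le\norm{a}_{\ell^2}\norm{\beta}_{\ell^1}$ and $\norm{\alpha*b}_{\ell^2}\le\norm{\alpha}_{\ell^1}\norm{b}_{\ell^2}$. The crucial step, and the only place the hypothesis $p>\frac{n}{2}$ is used, is to control the $\ell^1$ norm of the purely exponentially weighted sequences by the full Gevrey--Sobolev norms: by Cauchy--Schwarz, $\norm{\beta}_{\ell^1}=\sum_m(1+|m|^2)^{-p/2}(1+|m|^2)^{p/2}e^{\t|m'|}|v_m|\le\big(\sum_m(1+|m|^2)^{-p}\big)^{1/2}\norm{b}_{\ell^2}$, and the series $\sum_m(1+|m|^2)^{-p}$ converges precisely when $p>\frac{n}{2}$. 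Collecting these estimates yields $\norm{A^p e^{\t\mathscr A}(uv)}\le C\,\norm{A^p e^{\t\mathscr A}u}\,\norm{A^p e^{\t\mathscr A}v}$ with a constant of the announced shape, a numerical power of $2$ times $\big(\sum_j(1+|j|^2)^{-p}\big)^{1/2}$.

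The main obstacle is the non-multiplicativity of the Sobolev weight $(1+|j|^2)^{p/2}$: unlike $e^{\t|j'|}$, it does not split as a product, which forces the ``one of the two frequencies dominates $j$'' device and thereby produces both the factor $2^p$ and the two-term convolution structure. Obtaining the \emph{sharp} constant $C_0=2^p\sqrt{2\sum_j(1+|j|^2)^{-p}}$, rather than a larger one, requires organizing this split without double counting --- restricting each of the two terms to the frequency region where it dominates and applying Cauchy--Schwarz there, instead of naively summing two full convolutions, which would cost an extra factor of $\sqrt2$. Everything else is routine bookkeeping.
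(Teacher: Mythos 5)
Your proposal is correct and is essentially the paper's own proof: the paper's argument for Lemma \ref{lem-2} is just a citation to Lemma 1 of Ferrari--Titi \cite{Ferrari-Titi-98}, whose proof is exactly your Fourier-side convolution estimate --- submultiplicativity of the exponential weight $e^{\t|j'|}$, the ``dominant frequency'' splitting producing the factor $2^p$, and the Young/Cauchy--Schwarz step where $p>\frac{n}{2}$ guarantees $\sum_{j\in\ints^n}(1+|j|^2)^{-p}<\infty$. Your closing remark is also on target: performing Cauchy--Schwarz region by region on the single bilinear sum, rather than adding two full convolution estimates, is precisely the ``careful estimate of the constant'' the paper alludes to, and it yields $C_0=2^p\sqrt{2\sum_{j\in\ints^n}(1+|j|^2)^{-p}}$ in place of the factor-$\sqrt{2}$-worse constant $2^{p+1}\bigl(\sum_{j\in\ints^n}(1+|j|^2)^{-p}\bigr)^{1/2}$ that the naive two-convolution bound gives.
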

\begin{proof}
Similar to Lemma 1 in \cite{Ferrari-Titi-98} with careful estimate of the constant $C_0$.
\end{proof}

\par\smallskip\noindent
\textbf{Acknowledgement\,:}   This paper is dedicated to Professor Neil Trudinger
on the occasion of his  70th birthday,  as token of friendship
and admiration for him as a teacher, and for his great contribution to research in partial
differential equations. This work was  supported in part by the Minerva Stiftung/Foundation, and  by the NSF
grants DMS-1009950, DMS-1109640 and DMS-1109645.



\begin{thebibliography}{99}
\bibitem{Alinhac-Metivier-84} S. Alinhac and G. Metivier, Propagation de l'analyticit\'e des solutions d'\'equations hyperboliques non-lin\'eaires, Inventiones Mathematicae 75 (1984), 189-204.

\bibitem{Bardos-Benachour} C. Bardos and S. Benachour, Domaine d'analyticit\'e des solutions de l'\'equation d'Euler dans un ouvert
de $\mathbb{R}^n$, Annal. Sc. Normale Sup. di Pisa, Volume d\'edi\'e \`a Jean Leray (1978),  507-547.

\bibitem{Biswas-Jolly-Martinez-Titi} A. Biswas, M. S. Jolly, V. Martinez, and E. S. Titi, Smallest scale estimates for the Navier-Stokes equations in the Wiener algebra, preprint (2013).

\bibitem{Biswas-Swanson} A. Biswas and D. Swanson, Gevrey regularity of solutions to the 3-D Navier-Stokes equations with weighted $l_p$ initial data,  Indiana University Mathematics Journal 56 (2007), 1157-1188.

\bibitem{Brenner-79} P. Brenner, On the existence of global smooth solutions for certain
semilinear hyperbolic equations, Mathematische Zeitschrift 167 (1979), 99-135.

\bibitem{Brenner-89} P. Brenner, On space-time means and strong global solutions of nonlinear hyperbolic equations, Mathematische Zeitschrift 201 (1989), 45-55.

\bibitem{Cao-Rammaha-Titi1}  C. Cao, M. A. Rammaha, and E. S. Titi,
Gevrey regularity for nonlinear analytic parabolic equations
on the sphere, Journal of Dynamics \& Differential Equations, 12 (2000), 411-433.

\bibitem{Cao-Rammaha-Titi2} C. Cao, M. A. Rammaha, and E. S. Titi,
 The Navier--Stokes equations on the rotating $2-D$ sphere:
Gevrey regularity and asymptotic degrees of freedom,
 Zeitschrift f{\"{u}}r Angewandte
Mathematik und Physik (ZAMP)   50  (1999), 341-360.

\bibitem{Doelman-Titi-93} A. Doelman and E. S. Titi, Regularity of solutions and the convergence of the Galerkin method in the Ginzburg-Landau equation,
Numerical Functional Analysis and Optimization 14 (1993), 299-321.

\bibitem{Ferrari-Titi-98} A. B. Ferrari and E. S. Titi, Gevrey regularity for nonlinear analytic parabolic equations, Communications in Partial Differential Equations 23 (1998), 1-16.

\bibitem{Foias-Temam-89} C. Foias and  R. Temam, Gevery class regularity for the solutions of the Navier-Stokes equations, Journal of Functional Analysis 87 (1989), 359-369.

\bibitem{Guo-Titi-13} Patrick G\'erard, Yanqiu Guo, and Edriss S. Titi, On the radius of analyticity of solutions to the cubic Szeg\H o equation, preprint (2013), \underline{arXiv:1303.6148}

\bibitem{Ibrahim-06} S. Ibrahim, M. Majdoub, and N. Masmoudi, Global solutions for a semilinear two-dimensional Klein-Gordon equation with exponential-type nonlinearity, Communications on Pure and Applied Mathematics 59 (2006), 1639-1658.

\bibitem{Kalantatov-Levant-Titi} V. K. Kalantarov, B. Levant, and E. S. Titi,
  Gevrey regularity of the global attractor of the 3D Navier-Stokes-Voight equations, Journal
of Nonlinear Science 19 (2009), 133-152.

\bibitem{Kreiss} H. O. Kreiss,  Fourier expansions of the Navier-Stokes
equations and their exponential decay rate, Analyse Math\'{e}matique
et Applications, Gauthier-Villars, Paris (1988), 245-262.

\bibitem{Kukavica-Vicol-09} I. Kukavica and V. Vicol, On the radius of analyticity of solutions to the three-dimensional Euler equations, Proceedings of the American Mathematical Society 137 (2009), 669-677.

\bibitem{Kuksin-Nadirashvili-12} S. Kuksin and N. Nadirashvili, Analyticity of solutions for quasilinear wave equations and other quasilinear systems, preprint (2012), arXiv:1205.5926.

\bibitem{Larios-Titi-10} A. Larios and E. S. Titi, On the higher-order global regularity of the inviscid Voigt-regularization of three-dimensional
hydrodynamic models, Discrete and Continuous Dynamical Systems Series B 14 (2010), 603-627.

\bibitem{Levermore-Oliver-97} C. D. Levermore and M. Oliver, Analyticity of solutions for a generalized Euler equation, Journal of Differential Equations 133 (1997), 321-339.

\bibitem{Liu-01} S. Liu, Z. Fu, S. Liu, and Q. Zhao, Jacobi elliptic function expansion method and periodic wave solutions of nonlinear wave equations, Physics Letter A 289 (2001), 69-74.

\bibitem{Nirenberg-72} L. Nirenberg, An abstract form of the nonlinear Cauchy-Kowalewski theorem, Journal of Differential Geometry 6 (1972) 561-576.

\bibitem{Oliver-Titi-00} M. Oliver and E. S. Titi, Remark on the rate of decay of higher order derivatives for solutions to the Navier-Stokes equations in $\reals^n$, Journal of Functional Analysis 172 (2000), 1-18.

\bibitem{Oliver-Titi-01} M. Oliver and E. S. Titi, On the domain of analyticity for solutions of second order analytic nonlinear differential equations,
Journal of Differential Equations 174 (2001), 55-74.

\bibitem{Ovsjannikov-71} L. V. Ovsjannikov, A nonlinear Cauchy problem in a scale of Banach spaces,
Dokl. Akad. Nauk SSSR, 200 (1971) 789-792; Soviet Math. Dokl. 12 (1971) 1497-1502.

\bibitem{Simon-87} J. Simon, Compact sets in the space $L^p(0,T;B)$, Annali di Matematica pura ed applicata (IV) Vol. CXLVI (1987), 65-96.

\bibitem{Struwe-11} M. Struwe, Global well-posedness of the Cauchy problem for a super-critical nonlinear wave equation in two space dimensions, Mathematische Annalen 350 (2011), 707-719.

\bibitem{Struwe-11'} M. Struwe, A `super-critical' nonlinear wave equation in 2 space dimensions, Milan Journal of Mathematics 79 (2011), 129-143.

\bibitem{Takac-96} P. Takac, P. Bollerman, A. Doelman, A. Van Harten, and E. S. Titi, Analyticity of essentially bounded solutions to semilinear parabolic systems and validity of the Ginzburg-Landau equation, SIAM Journal on Mathematical Analysis 27 (1996), 424-448.

\bibitem{Temam-83} R. Temam, Navier-Stokes equations and nonlinear functional analysis, CBMS-NSF regional conference series in applied mathematics, SIAM, 1983.

\end{thebibliography}
\end{document}